\documentclass[12pt]{article}
\usepackage{amsmath,amsthm,amsfonts,amssymb}
\usepackage{fullpage}
\usepackage{multirow}
\usepackage{array}
\usepackage{bbm}
\usepackage[noblocks]{authblk}
\usepackage{verbatim}
\usepackage{tikz}
\usepackage{float}
\usepackage{enumerate}
\usepackage{diagbox}
\usepackage{soul}
\usepackage{url}
\usepackage{mathtools}
\usepackage{hyperref}
\usepackage{listings}

\DeclareMathOperator{\lcm}{lcm}

\newtheorem{theorem}{Theorem}[section]

\newtheorem{lemma}[theorem]{Lemma}

\newtheorem{corollary}[theorem]{Corollary}
\newtheorem{conjecture}[theorem]{Conjecture}

\theoremstyle{definition}

\newtheorem{question}[theorem]{Question}

\newlength{\Oldarrayrulewidth}

\newcommand{\ord}{\operatorname{ord}}

\addtolength{\textheight}{0in}

\begin{document}

\author[1]{Chris~Bispels\thanks{cbispel1@umbc.edu}}
\author[2]{Matthew~Cohen\thanks{matthewcohen@cmu.edu}}
\author[3]{Joshua~Harrington\thanks{joshua.harrington@cedarcrest.edu}}
\author[4]{Joshua~Lowrance\thanks{joshua.lowrance@biola.edu}}
\author[5]{Kaelyn~Pontes\thanks{kaelyn.pontes@hastings.edu}}
\author[6]{Leif~Schaumann\thanks{schaumann1@kenyon.edu}}
\author[8]{Tony~W.~H.~Wong\thanks{wong@kutztown.edu}}
\affil[1]{Department of Mathematics, University of Maryland, Baltimore County}
\affil[2]{Department of Mathematical Sciences, Carnegie Mellon University}
\affil[3]{Department of Mathematics, Cedar Crest College}
\affil[4]{Department of Mathematics and Computer Science, Biola University}
\affil[5]{Department of Mathematics, Hastings College}
\affil[6]{Department of Mathematics, Kenyon College}
\affil[7]{Department of Mathematics, Kutztown University of Pennsylvania}
\date{\today}

\title{On Sierpi\'nski and Riesel Repdigits and Repintegers}
\maketitle

\begin{abstract}
For positive integers $b\geq 2$, $k<b$, and $t$, we say that an integer $k_b^{(t)}$ is a $b$-repdigit if $k_b^{(t)}$ can be expressed as the digit $k$ repeated $t$ times in base-$b$ representation, i.e., $k_b^{(t)}
=k(b^t-1)/(b-1)$. In the case of $k=1$, we say that $1_b^{(t)}$ is a $b$-repunit. In this article, we investigate the existsence of $b$-repdigits and $b$-repunits among the sets of Sierpi\'nski numbers and Riesel numbers. A Sierpi\'nski number is defined as an odd integer $k$ for which $k\cdot 2^n+1$ is composite for all positive integers $n$ and Riesel numbers are similarly defined for the expression $k\cdot 2^n-1$.

\textit{MSC:} 11A63, 11B25.\\
\textit{Keywords:} Sierpi\'nski numbers, Riesel numbers, covering systems, repdigits, repunits, repintegers, repstrings.
\end{abstract}

\section{Introduction}

In 1956, Riesel demonstrated that $509203\cdot 2^n-1$ is composite for all positive integers $n$ \cite{riesel}. He further proved the existence of infinitely many odd integers $k$ for which $k\cdot 2^n-1$ is composite for all positive integers $n$. Similarly, in 1960, Sierpi\'nski showed that there are infinitely many odd integers $k$ such that $k\cdot 2^n+1$ is composite for all positive integers $n$ \cite{sierpinski}. These numbers are now known as Riesel numbers and Sierpi\'nski numbers, respectively. To date, 509203 is the smallest known Riesel number. The smallest known Sierpi\'nski number, discovered by John Selfridge in 1962, is 78557. We note that Selfridge never published his discovery.

Building on the foundational work of Riesel and Sierpi\'nski, researchers have explored the presence of Riesel numbers and Sierpi\'nski numbers in various integer sequences. Such investigations have consider Riesel numbers and Sierpi\'nski numbers among binomial coefficients \cite{abbdehsw}, polygonal numbers \cite{be,befsk}, Lucas numbers \cite{bff}, Carmichael numbers \cite{bflps}, perfect $j$-th powers \cite{ffk,fj}, Ruth-Aaron pairs \cite{efk}, Narayana's cow sequence \cite{fh}, and Fibonacci numbers \cite{lh}. 

For positive integers $b\geq 2$, $k<b$, and $t$, we say that an integer $k_b^{(t)}$ is a \emph{$b$-repdigit} if $k_b^{(t)}$ can be expressed as the digit $k$ repeated $t$ times in base-$b$ representation, i.e., $k_b^{(t)}
=k(b^t-1)/(b-1)$. In the case of $k=1$, we say that $1_b^{(t)}$ is a \emph{$b$-repunit}. Repunits and repdigits have been topics of studies in the literature since 1966 \cite{beiler,trigg}. Extending the definition of $b$-repdigits, for any positive integer $k$, we say that an integer $k_b^{(t)}$ is a \emph{$b$-repinteger} if $k_b^{(t)}$ can be expressed as $k$ repeated $t$ times in base-$b$ representation, i.e., $k_b^{(t)}=k(b^{\ell t}-1)/(b^\ell-1)$, where $\ell=\lfloor\log_b(k)\rfloor+1$. In this article, we investigate the existence of Sierpi\'nski numbers and Riesel numbers in $b$-repunits, $b$-repdigits, and $b$-repintegers. 

In Section~\ref{sec:results}, we provide an argument to suggest the nonexistence of $2$-repdigit Sierpi\'nski numbers. In light of this, the following questions are the motivation for the other results in this paper.

\begin{question}\label{question:smallrepunitbase}
What is the smallest integer $\beta_1\geq2$ for which there exists a $\beta_1$-repunit Sierpi\'nski number? 
\end{question}

\begin{question}\label{question:smallrepdigitbase}
What is the smallest integer $\beta_2\geq2$ for which there exists a $\beta_2$-repdigit Sierpi\'nski number?
\end{question}

\begin{question}\label{questions:smallrepintegerbase2}
What is the smallest positive integer $\kappa$ for which there exists a positive integer $t$ such that $\kappa_2^{(t)}$ is a $2$-repinteger Sierpi\'nski number?
\end{question}

We will establish the existence of $\beta_1$, $\beta_2$, and $\kappa$, and show that $\beta_1\leq 147$, $\beta_2\leq 87$, and $\kappa\leq 18107$ by Corollary~\ref{cor:sierpinskismallrepunitbase}, Theorem~\ref{thm:smallrepdigitbase}, and Theorem~\ref{thm:smallrepintegerbase2sierpinski}, respectively. Similar questions can be asked about Riesel numbers.

\begin{question}\label{question:smallrepunitbaseriesel}
What is the smallest integer $\beta'_1\geq2$ for which there exists a $\beta'_1$-repunit Riesel number? 
\end{question}

\begin{question}\label{question:smallrepdigitbaseriesel}
What is the smallest integer $\beta'_2\geq2$ for which there exists a $\beta'_2$-repdigit Riesel number?
\end{question}

\begin{question}\label{questions:smallrepintegerbase2riesel}
What is the smallest positive integer $\kappa'$ for which there exists a positive integer $t$ such that $\kappa'^{(t)}_2$ is a $2$-repinteger Riesel number?
\end{question}

Analogous results for Riesel numbers are $\beta'_1\leq 16518444216571$, $\beta'_2\leq 180$, and $\kappa'\leq 18107$ by Corollary~\ref{cor:rieselsmallrepunitbase}, Theorem~\ref{thm:smallrepdigitbaseriesel}, and Theorem~\ref{thm:smallrepintegerbase2riesel}, respectively.

\section{The Covering System Method}\label{sec:coveringsystems}
Finding Sierpi\'nski numbers and Riesel numbers has historically involved the use of covering systems of the integers. In this section, we demonstrate the method that is commonly used to find these numbers. 

A \emph{covering system of integers}, often referred to simply as a \emph{covering system}, is a finite collection of congruences such that every integer satisfies at least one of the congruences in the collection. One can take $\mathcal{C}_0=\{0\pmod{3}, 1\pmod{3}, 2\pmod{3}\}$ for a simple example of a covering system. The collection $\mathcal{C}_1=\{0\pmod{2}, 0\pmod{3}, 1\pmod{4}, 3\pmod{8}, 11\pmod{12},7\pmod{24}\}$ is a less trivial example of a covering system. We will use this covering system to demonstrate our process for generating Sierpi\'nski numbers and Riesel numbers.

Let $n$ be a fixed positive integer. Since $\mathcal{C}_1$ is a covering system, $n$ must satisfy at least one of the congruences in the covering. Suppose $n\equiv 0\pmod{2}$. Then $n=2w$ for some integer $w$ and
\begin{align*}
k\cdot 2^n+1
&=k\cdot 2^{2w}+1\\
&=k\cdot 4^w+1\\
&\equiv k+1\pmod{3}.
\end{align*}
Thus, if $k\equiv 2\pmod{3}$ and $k>0$, then $k\cdot 2^n+1$ is divisible by $3$ and is therefore composite since $k\cdot 2^n+1>3$ for all positive $n\equiv 0\pmod{2}$. Using this technique, we go through each of the congruences in $\mathcal{C}_1$ to get the following:
\begin{align}\label{eq:sierpinskidemo}
n\equiv 0\pmod{2}\text{ and }k\equiv 2\pmod{3}&\implies k\cdot 2^n+1\equiv 0\pmod{3}\nonumber\\
n\equiv 0\pmod{3}\text{ and }k\equiv 6\pmod{7}&\implies k\cdot 2^n+1\equiv 0\pmod{7}\nonumber\\
n\equiv 1\pmod{4}\text{ and }k\equiv 2\pmod{5}&\implies k\cdot 2^n+1\equiv 0\pmod{5}\nonumber\\
n\equiv 3\pmod{8}\text{ and }k\equiv 2\pmod{17}&\implies k\cdot 2^n+1\equiv 0\pmod{17}\\
n\equiv 11\pmod{12}\text{ and }k\equiv 11\pmod{13}&\implies k\cdot 2^n+1\equiv 0\pmod{13}\nonumber\\
n\equiv 7\pmod{24}\text{ and }k\equiv 32\pmod{241}&\implies k\cdot 2^n+1\equiv 0\pmod{241}.\nonumber
\end{align}
To ensure that $k$ is odd, we further require $k\equiv 1\pmod{2}$. Note that the system of congruences on $k$ has infinitely many solutions by the Chinese remainder theorem. To ensure that $k\cdot 2^n+1$ is not in the set $\{3,7,5,17,13,241\}$, we may select $k$ to be sufficiently large, thus yielding a Sierpi\'nski number. 

It is important to note that each of the implications in \eqref{eq:sierpinskidemo} can be written as 
\begin{equation}\label{eq:sierpinskicongruence}
n\equiv r_j\pmod{m_j}\text{ and }k\equiv -2^{-r_j}\pmod{p_j}\implies k\cdot 2^n+1\equiv 0\pmod{p_j},
\end{equation}
where $r_j\pmod{m_j}$ is a congruence in our covering system and $p_j$ is a prime divisor of $2^{m_j}-1$. We use a similar set of implications to construct Riesel numbers:

\begin{equation}\label{eq:rieselcongruence}
n\equiv r_j\pmod{m_j}\text{ and }k\equiv 2^{-r_j}\pmod{p_j}\implies k\cdot 2^n-1\equiv 0\pmod{p_j}.
\end{equation}

To construct Sierpi\'nski numbers and Riesel numbers from a generic covering system $\mathcal{C}=\{r_j\pmod{m_j}:1\leq j\leq \tau\}$, we want $p_i\neq p_j$ for each $1\leq i<j\leq\tau$. The following theorem of Bang \cite{bang} is a useful tool to help establish this criteria. For a positive integer $m$, we call a prime $p$ a \emph{primitive prime divisor} of $2^m-1$ if $p$ divides $2^m-1$ and $p$ does not divide $2^\mu-1$ for any positive integers $\mu<m$. 

\begin{theorem}[Bang]\label{thm:bang}
For each integer $m\geq 2$ with $m\neq 6$, there exists a primitive prime divisor of $2^m-1$.
\end{theorem}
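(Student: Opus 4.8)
The statement is Bang's theorem, the base-$2$ case of Zsygmondy's theorem; the plan is to prove it via cyclotomic polynomials. Let $\Phi_m$ denote the $m$-th cyclotomic polynomial, so that $2^m-1=\prod_{d\mid m}\Phi_d(2)$, and for an odd prime $p$ write $\ord_p(2)$ for the multiplicative order of $2$ modulo $p$. The key reduction is the observation that $p$ is a primitive prime divisor of $2^m-1$ exactly when $\ord_p(2)=m$.

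First I would record the standard "order lemma": if a prime $p$ divides $\Phi_m(2)$ and $d=\ord_p(2)$, then either $m=d$ (so $p$ is a primitive prime divisor of $2^m-1$) or $m=dp^e$ with $e\ge1$ (so $p\mid m$). This comes from comparing $p$-adic valuations in $2^m-1=\prod_{d\mid m}\Phi_d(2)$ together with the lifting-the-exponent identity $v_p(2^m-1)=v_p(2^d-1)+v_p(m/d)$; since $2^m-1$ is odd the notorious $p=2$ exception to that identity never arises. In the second case one also gets $v_p(\Phi_m(2))=1$, and since $d\mid p-1$ forces every prime factor of $d$, hence of $m=dp^e$, to be at most $p$, the prime $p$ must be the \emph{largest} prime factor of $m$. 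Consequently $2^m-1$ has a primitive prime divisor if and only if $\Phi_m(2)$ has a prime factor not dividing $m$; and if it has none, then (as $\Phi_m(2)\ge3$ for $m\ge2$, so $\Phi_m(2)\ne1$) we must have $\Phi_m(2)=P$, the largest prime factor of $m$. In particular $\Phi_m(2)\le m$ in this bad case.

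Next I would derive a lower bound on $\Phi_m(2)$ contradicting $\Phi_m(2)\le m$ for all large $m$. Writing $\Phi_m(2)=\prod_{d\mid m}(2^d-1)^{\mu(m/d)}=2^{\varphi(m)}\prod_{d\mid m}(1-2^{-d})^{\mu(m/d)}$ and using the elementary estimate $\prod_{d\mid m}(1-2^{-d})^{\mu(m/d)}\ge\prod_{d\ge1}(1-2^{-d})>\tfrac14$ — each factor with $\mu(m/d)=-1$ contributes a quantity $\ge1$, and the remaining factors form a subproduct of $\prod_{d\ge1}(1-2^{-d})$ — one obtains $\Phi_m(2)>2^{\varphi(m)-2}$. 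Hence if $2^m-1$ has no primitive prime divisor then $\varphi(m)<\log_2 m+2$. Since $\varphi(m)\to\infty$ far faster than $\log_2 m$ (e.g. $\varphi(m)\ge\sqrt m$ for all but a few small $m$), this forces $m$ into an explicit finite list of small values, which I would then dispose of by directly factoring $2^m-1$: each has a primitive prime divisor except $m=6$, where $2^6-1=63=3^2\cdot7$ with $3\mid2^2-1$ and $7\mid2^3-1$.

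The main obstacle is the order lemma together with the exact-power claim $v_p(\Phi_m(2))=1$: these need a careful lifting-the-exponent argument, and although the $p=2$ obstruction is automatically avoided here (since $2^m-1$ is odd), one still has to argue cleanly that the bad prime is unique and occurs to the first power in order to conclude $\Phi_m(2)=P$. A secondary point of care is making the passage to "finitely many small $m$" rigorous — either via a monotonicity argument giving $\varphi(m)\ge\log_2 m+2$ for $m$ beyond an explicit bound $M_0$, or by citing a standard lower bound for $\varphi(m)$ — and then checking the short remaining list by hand.
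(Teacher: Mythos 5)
The paper offers no proof of this statement: it is Bang's 1886 theorem, quoted verbatim with a citation, so there is nothing internal to compare your argument against. Your proposal is the standard cyclotomic-polynomial proof of the base-$2$ Zsygmondy/Bang theorem, and it is sound. The order lemma is correct as stated (for $p\mid\Phi_m(2)$ with $d=\ord_p(2)<m$, comparing $v_p$ of $2^m-1$ and of $2^{m/\ell}-1$ for a prime $\ell\mid(m/d)$ via lifting-the-exponent forces $m/d$ to be a power of $p$ and gives $v_p(\Phi_m(2))=1$; the only point worth making explicit is that $\Phi_m(2)\cdot(2^{m/\ell}-1)$ divides $2^m-1$ because $\Phi_m$ does not occur among the cyclotomic factors of $2^{m/\ell}-1$). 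The deduction that a primitive-divisor-free $\Phi_m(2)$ must equal the largest prime factor $P$ of $m$, hence is at most $m$, is right, as is the lower bound $\Phi_m(2)=2^{\varphi(m)}\prod_{d\mid m}(1-2^{-d})^{\mu(m/d)}>2^{\varphi(m)-2}$, since $\prod_{d\geq1}(1-2^{-d})\approx0.2888>1/4$. The resulting constraint $\varphi(m)<\log_2 m+2$ together with $\varphi(m)\geq\sqrt{m}$ (valid for $m\neq2,6$) and the parity/size restrictions on $\varphi$ confines $m$ to the short list $\{2,3,4,5,6,8,10,12,18\}$, each member of which is disposed of by factoring $2^m-1$ directly, with $m=6$ the unique failure. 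So the proposal is a correct, self-contained route to a result the paper merely cites; the only work remaining is the routine finite verification you already flag.
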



\section{Main Results}\label{sec:results}
Note that if $k$ is a Sierpi\'nski number found using the covering system method described in Section~\ref{sec:coveringsystems}, then $k\cdot 2^n+1$ is divisible by one of the primes in the set $\{p_1,p_2,\ldots,p_\tau\}$. This observation was known to Erd\H{o}s, who made the following conjecture \cite{guy}.

\begin{conjecture}\label{conj:erdos}
If $k$ is a Sieprinski number, then the smallest prime divisor of $k\cdot 2^n+1$ is bounded as $n$ tends to infinity.
\end{conjecture}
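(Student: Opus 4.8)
This is Erd\H{o}s's conjecture, and to the best of my knowledge it is open; I do not have a complete proof, so what follows is the shape of the argument one would attempt and the point at which it stalls. The natural first move is to reformulate the statement in the language of Section~\ref{sec:coveringsystems}. For a prime $p$ write $m_p=\ord_p(2)$. Then the set of exponents $\{n\in\N:p\mid k\cdot 2^n+1\}$ is either empty (when $k\cdot 2^n\equiv-1\pmod p$ has no solution $n$) or a single residue class $n\equiv\nu_p\pmod{m_p}$, where $\nu_p$ is the discrete logarithm of $-k^{-1}$ to base $2$ modulo $p$. Hence ``the smallest prime divisor of $k\cdot 2^n+1$ is bounded as $n\to\infty$'' is equivalent to the existence of a finite set of primes $S$ for which the arithmetic progressions $\{n:p\mid k\cdot 2^n+1\}$, $p\in S$, cover all sufficiently large $n$. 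For a Sierpi\'nski number produced by the covering-system method the smallest prime divisor is trivially at most $\max_j p_j$, so the content of the conjecture is the converse: every Sierpi\'nski number must in fact be ``explained'' by such a finite family of progressions, rather than by a sparser, aperiodic conspiracy among its prime factors.

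Granting this reformulation, I would argue by contradiction. Suppose $k$ is Sierpi\'nski but its smallest prime divisor is unbounded. Then for each threshold $P$ the progressions coming from primes $p\le P$ do not cover $\Z$, so one can fix a residue class modulo $\lcm_{p\le P}m_p$ that avoids all of them; on that class $k\cdot 2^n+1$ has no prime factor below $P$, and the set of such $n$ has positive lower density. The aim is then to let $P\to\infty$ and, by a compactness or diagonal argument across these nested families of residue classes, locate a single $n$ (in fact infinitely many) for which $k\cdot 2^n+1$ has no prime factor up to $\sqrt{k\cdot 2^n+1}$ --- hence is prime, contradicting that $k$ is Sierpi\'nski. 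A quantitative version of the same plan would fix a slowly growing $P=P(N)$, restrict $n\le N$ to the uncovered residue class, and run an upper-and-lower-bound sieve to show that the number of such $n$ with $k\cdot 2^n+1$ prime is positive.

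The obstacle is precisely the step that manufactures a prime value. The sequence $k\cdot 2^n+1$ is exponentially sparse, and there is no known method --- sieve-theoretic or otherwise --- that proves such a sequence assumes a prime value, much less infinitely many: the parity obstruction defeats the lower-bound sieve and nothing unconditional replaces it. A proof of the conjecture would, as a very special case, go well beyond what is currently provable about primes in thin sequences, so I do not expect an unconditional argument. What does seem within reach, and what I would actually try to prove, is a \emph{conditional} theorem: under a Bateman--Horn / prime $k$-tuples type heuristic for $k\cdot 2^n+1$ along arithmetic progressions in $n$, the contradiction above goes through, so conditionally every Sierpi\'nski number is of covering type. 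Independently, for any particular $k$ one can try to exhibit an explicit covering and thereby verify the conjecture for that $k$ by a finite computation, though this sheds no light on the general case.
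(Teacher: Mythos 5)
This statement is stated in the paper as a conjecture of Erd\H{o}s and is given no proof there; indeed the paper explicitly remarks, citing \cite{ffk} and \cite{izotov}, that there is evidence the conjecture may be \emph{false}. You were right not to manufacture an argument: your reformulation (bounded smallest prime divisor $\iff$ the residue classes $n\equiv\nu_p\pmod{\ord_p(2)}$ for $p$ in some finite set cover all large $n$) is correct, and your identification of the fatal step --- producing a prime value in the exponentially thin sequence $k\cdot 2^n+1$ --- is exactly where any such attempt dies.

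One substantive caution about your closing paragraph. Your conditional claim that ``under a Bateman--Horn type heuristic every Sierpi\'nski number is of covering type'' points in the opposite direction from the references the paper cites. Izotov's construction (and its elaboration by Filaseta, Finch, and Kozek) produces Sierpi\'nski numbers of the form $k=4m^4$ by covering only part of the residue classes of $n$ with primes and handling the class $n\equiv 2\pmod 4$ via the algebraic identity $4m^4\cdot 2^n+1=x^4+4y^4=(x^2+2y^2+2xy)(x^2+2y^2-2xy)$. On that class $k\cdot 2^n+1$ is composite because it splits into two factors of comparable size, not because it has a small prime factor; heuristically its smallest prime divisor is unbounded there. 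So your contradiction step --- passing from ``no prime factor up to a slowly growing $P$'' to ``no prime factor up to $\sqrt{k\cdot 2^n+1}$, hence prime'' --- is blocked not only by the parity problem but by genuine algebraic factorizations, and the expected answer is that the conjecture fails rather than that it holds conditionally.
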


Although this conjecture has not been disproven, there are a few papers that suggest that it may be false \cite{ffk,izotov}. If Conjecture~\ref{conj:erdos} is true, then the following theorem suggests the nonexistence of $2$-repdigit Sierpi\'nski numbers. We note that the argument in the proof of Theorem~\ref{thm:nosierpinskirepdigit} cannot be easily modified to provide insight on the existence of $2$-repdigit Riesel numbers.

\begin{theorem}\label{thm:nosierpinskirepdigit}
The covering system method in Section~$\ref{sec:coveringsystems}$ cannot yield a $2$-repdigit Sierpi\'nski number.
\end{theorem}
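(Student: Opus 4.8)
The plan is to use only one feature of a $2$-repdigit, together with the fact that a covering system must cover the integer $0$. Since $b=2$ forces the digit $k$ to equal $1$, a $2$-repdigit is nothing but the $2$-repunit $1_2^{(t)}=(2^t-1)/(2-1)=2^t-1$ for some positive integer $t$; the relevant property is that such a number $K$ satisfies $K+1=2^t$, so $K+1$ has no odd prime divisor. Suppose for contradiction that the method of Section~\ref{sec:coveringsystems} produces a Sierpi\'nski number $K=2^t-1$. By that method there is a covering system $\mathcal C=\{r_j\pmod{m_j}:1\le j\le\tau\}$ and primes $p_1,\dots,p_\tau$, each $p_j$ a divisor of $2^{m_j}-1$, with $K\equiv-2^{-r_j}\pmod{p_j}$ for every $j$; and each $p_j$ is odd because $2^{m_j}-1$ is odd.

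The key step is then to test the certificate at $n=0$. As $\mathcal C$ covers $\Z$, some class contains $0$, say $0\equiv r_{j_0}\pmod{m_{j_0}}$, so $m_{j_0}\mid r_{j_0}$; since $p_{j_0}\mid 2^{m_{j_0}}-1$ we get $2^{r_{j_0}}\equiv1\pmod{p_{j_0}}$, whence $K\equiv-2^{-r_{j_0}}\equiv-1\pmod{p_{j_0}}$ and $p_{j_0}\mid K+1=2^t$, which is impossible for an odd prime. I do not expect any genuine obstacle here: the entire content is the two observations just made. The one point deserving a sentence is that $\mathcal C$ is a covering system of all of $\Z$ rather than merely of $\Z_{>0}$; but this costs nothing, since $\bigcup_j\{n:n\equiv r_j\pmod{m_j}\}$ is periodic and therefore covering every positive integer already forces it to cover $0$.

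To close, I would note why this reasoning does not carry over to Riesel numbers, as already flagged before the statement. There the certificate is $K\equiv 2^{-r_j}\pmod{p_j}$, so testing at $n=0$ gives only $p_{j_0}\mid K-1=2^t-2=2(2^{t-1}-1)$; an odd prime can certainly divide $2^{t-1}-1$ (for example $3\mid 2^{2}-1$, the case $t=3$), so no contradiction arises and the method is not excluded for $2$-repdigit Riesel numbers.
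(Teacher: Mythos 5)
Your proof is correct and takes essentially the same route as the paper: both identify a congruence class in the covering with $r_j\equiv 0\pmod{m_j}$ (the paper by evaluating at $n=L=\lcm$ of the moduli, you by evaluating at $n=0$), deduce $k\equiv-1\pmod{p_j}$, and observe this forces the odd prime $p_j$ to divide $2^t$. Your handling of a possibly non-normalized residue via $2^{r_{j_0}}\equiv1\pmod{p_{j_0}}$ and your closing remark on why the argument fails for Riesel numbers are both consistent with the paper.
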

\begin{proof}
Let $\mathcal{C}=\{r_j\pmod{m_j}:1\leq j\leq \tau\}$ be a covering system and let $L$ be the least common multiple of the moduli in $\mathcal{C}$. Then $L$ must satisfy at least one of the congruences of $\mathcal{C}$, and it follows that $r_j=0$ for some $1\leq j\leq \tau$. Thus, to satisfy \eqref{eq:sierpinskicongruence}, we need $k\equiv -2^{-r_j}\equiv -1\pmod{p_j}$. Note that if $k$ is a $2$-repdigit, then $k=2^t-1$ for some positive integer $t$. Therefore, we need $2^t-1\equiv -1\pmod{p_j}$, which is impossible since $p_j\neq 2$ for each $1\leq j\leq \tau$.
\end{proof}

While Theorem~\ref{thm:nosierpinskirepdigit} suggests that it is difficult to find $2$-repdigit Sierpi\'nski numbers, the following theorem establishes the existence of $b$-repinteger Sierpi\'nski numbers for any $b\geq2$. A similar proof can establish an analogous result on $b$-repinteger Riesel numbers, as stated in Theorem~\ref{thm:whendigitisriesel}.



\begin{theorem}\label{thm:whendigitissierpinski}
Let $k$ be a Sierpi\'nski number constructed using the covering system method in Section~$\ref{sec:coveringsystems}$. Then for every integer $b\geq 2$, there exist infinitely many positive integers $t$ such that $k_b^{(t)}$ is a Sierpi\'nski number.
\end{theorem}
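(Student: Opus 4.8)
The plan is to exploit the structure that the covering-system method of Section~\ref{sec:coveringsystems} imposes on $k$: there is a covering system $\mathcal{C}=\{r_j\pmod{m_j}:1\le j\le\tau\}$ together with distinct odd primes $p_j\mid 2^{m_j}-1$ for which $k\equiv -2^{-r_j}\pmod{p_j}$ for every $j$, with $k$ odd and chosen large enough that $k\cdot 2^n+1>\max_j p_j$ for all $n\ge 1$. The reasoning behind \eqref{eq:sierpinskicongruence} depends on $k$ only through its residues modulo the primes $p_j$, so it suffices to choose $t$ with $k_b^{(t)}\equiv k\pmod{p_j}$ for every $j$. Then, for any $n$, since $\mathcal{C}$ is a covering we may pick $j$ with $n\equiv r_j\pmod{m_j}$ and obtain $k_b^{(t)}\cdot 2^n+1\equiv -2^{-r_j}\cdot 2^{r_j}+1\equiv 0\pmod{p_j}$, while $k_b^{(t)}\cdot 2^n+1\ge k\cdot 2^n+1>\max_j p_j\ge p_j$; hence $k_b^{(t)}\cdot 2^n+1$ is a proper multiple of $p_j$ and therefore composite.

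So the real task is to determine the $t$ for which $k_b^{(t)}\equiv k\pmod{p_j}$. Let $\ell=\lfloor\log_b k\rfloor+1$ be the number of base-$b$ digits of $k$, so that $k_b^{(t)}=k\sum_{i=0}^{t-1}b^{i\ell}$. Since $p_j\ne 2$ we have $\gcd(k,p_j)=1$, so $k_b^{(t)}\equiv k\pmod{p_j}$ is equivalent to $S_t:=\sum_{i=0}^{t-1}b^{i\ell}\equiv 1\pmod{p_j}$. I would analyze this in three cases. If $p_j\mid b$, then every term of $S_t$ beyond the first vanishes modulo $p_j$, so $S_t\equiv 1\pmod{p_j}$ for all $t$. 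If $p_j\nmid b$ but $p_j\mid b^\ell-1$, then $S_t\equiv t\pmod{p_j}$, so the condition becomes $t\equiv 1\pmod{p_j}$. Otherwise $b^\ell-1$ is invertible modulo $p_j$, and $S_t\equiv 1\pmod{p_j}$ rearranges to $b^{\ell(t-1)}\equiv 1\pmod{p_j}$, i.e.\ $t\equiv 1\pmod{\ord_{p_j}(b^\ell)}$. In every case the admissible $t$ include all $t\equiv 1$ modulo some positive integer $M_j$; setting $M=\lcm(2,M_1,M_2,\ldots,M_\tau)$, every $t\equiv 1\pmod{M}$ satisfies $k_b^{(t)}\equiv k\pmod{p_j}$ for all $j$ at once.

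It remains to check that $k_b^{(t)}$ is odd. If $b$ is even, then $S_t\equiv 1\pmod 2$ since all terms past the first are even, and $k$ is odd, so $k_b^{(t)}=kS_t$ is odd. If $b$ is odd, then $S_t\equiv t\pmod 2$, but $2\mid M$ forces $t$ odd, so $S_t$, and hence $k_b^{(t)}$, is again odd. Thus every $t$ in the residue class $1\pmod{M}$---an infinite set---yields a $b$-repinteger $k_b^{(t)}$ that is an odd integer for which $k_b^{(t)}\cdot 2^n+1$ is composite for all $n\ge 1$, which is exactly the assertion.

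I expect the only delicate points to be the case analysis for $S_t\pmod{p_j}$---in particular, not assuming that $b$ is coprime to $p_j$, and keeping the case $p_j\mid b^\ell-1$ separate since there one cannot invert $b^\ell-1$---and making sure the parity requirement on $t$ is compatible with the congruence $t\equiv 1\pmod{M}$, which is why $2$ is folded into $M$. Everything else is a direct rerun of the computation in Section~\ref{sec:coveringsystems}. The analogous statement for Riesel numbers, Theorem~\ref{thm:whendigitisriesel}, follows in the same way, with \eqref{eq:rieselcongruence} in place of \eqref{eq:sierpinskicongruence}.
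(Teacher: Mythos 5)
Your proposal is correct and takes essentially the same route as the paper: choose $t$ in a suitable arithmetic progression so that $k_b^{(t)}\equiv k\pmod{p_j}$ for every $j$, then reuse the original covering system together with \eqref{eq:sierpinskicongruence} and an oddness condition on $t$. In fact your three-way case analysis of $S_t=\sum_{i=0}^{t-1}b^{i\ell}\pmod{p_j}$ is slightly more careful than the paper's argument, which imposes only $t\equiv1\pmod{\ord_{p_j}(b)}$ and passes over the subcase $p_j\nmid b$ but $p_j\mid b^\ell-1$, where the division by $b^\ell-1$ is not legitimate and one really does need $t\equiv1\pmod{p_j}$ as you require.
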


\begin{theorem}\label{thm:whendigitisriesel}
Let $k$ be a Riesel number constructed using the covering system method in Section~$\ref{sec:coveringsystems}$. Then for every integer $b\geq 2$, there exist infinitely many positive integers $t$ such that $k_b^{(t)}$ is a Riesel number.
\end{theorem}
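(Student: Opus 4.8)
The plan is to mimic the proof of Theorem~\ref{thm:whendigitissierpinski} almost verbatim, replacing the role of the prime $p_j$ with $p_j \mid 2^{m_j}+1$ wait — no: for Riesel numbers the construction uses \eqref{eq:rieselcongruence}, so the primes $p_j$ are still prime divisors of $2^{m_j}-1$, exactly as in the Sierpi\'nski case. So first I would recall the setup: $k$ is a Riesel number built from a covering system $\mathcal{C}=\{r_j\pmod{m_j}:1\le j\le\tau\}$ together with distinct primes $p_j \mid 2^{m_j}-1$ such that $k\equiv 2^{-r_j}\pmod{p_j}$, so that $k\cdot 2^n-1\equiv 0\pmod{p_j}$ whenever $n\equiv r_j\pmod{m_j}$. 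Write $\ell=\lfloor\log_b k\rfloor+1$, so that $k_b^{(t)}=k\cdot\frac{b^{\ell t}-1}{b^\ell-1}$, and observe $k_b^{(t)}\cdot 2^n-1 = \frac{(b^{\ell t}-1)}{b^\ell-1}\,(k\cdot 2^n) - 1$; the goal is to show that for infinitely many $t$, the number $k_b^{(t)}$ is itself a Riesel number.

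The key idea is that $k_b^{(t)} \equiv k \pmod{p_j}$ for every $j$ provided $p_j \nmid b^\ell-1$ and $t$ is chosen so that $\frac{b^{\ell t}-1}{b^\ell-1}\equiv 1\pmod{p_j}$, i.e. so that $b^{\ell t}\equiv b^\ell\pmod{p_j(b^\ell-1)}$, which holds whenever $t\equiv 1$ modulo $\ord$ of $b^\ell$ in the relevant groups. More carefully: for each $j$ with $p_j\nmid b$, let $d_j$ be the multiplicative order of $b^\ell$ modulo $p_j$; then $1+b^\ell+\dots+b^{\ell(t-1)}\equiv t\pmod{p_j}$ is false in general — instead one wants $t\equiv 1\pmod{d_j}$ to force $b^{\ell t}\equiv b^\ell$, but that controls only $b^{\ell t}-1$, not the quotient. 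The clean route is to impose $t\equiv 1 \pmod{p_j d_j'}$ where $d_j'$ is chosen so that the geometric sum telescopes to $1$ modulo $p_j$; concretely, if $p_j\nmid b^\ell-1$ then $\frac{b^{\ell t}-1}{b^\ell-1}\equiv \frac{b^\ell-1}{b^\ell-1}=1\pmod{p_j}$ as soon as $b^{\ell t}\equiv b^\ell\pmod{p_j}$, i.e. $t\equiv 1\pmod{d_j}$. If instead $p_j\mid b^\ell-1$, then $b\equiv $ something with $b^\ell\equiv 1$, and $\frac{b^{\ell t}-1}{b^\ell-1}\equiv t\pmod{p_j}$, so we want $t\equiv 1\pmod{p_j}$. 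In either case there is a congruence condition $t\equiv 1\pmod{n_j}$ (with $n_j$ dividing either $d_j$ or $p_j$) that guarantees $k_b^{(t)}\equiv k\pmod{p_j}$, hence $k_b^{(t)}\cdot 2^n - 1\equiv k\cdot 2^n-1\equiv 0\pmod{p_j}$ whenever $n\equiv r_j\pmod{m_j}$.

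So the argument runs: by the Chinese remainder theorem, choose $t$ to satisfy $t\equiv 1\pmod{\lcm_j n_j}$, and additionally large enough that $k_b^{(t)}$ is odd and $k_b^{(t)}\cdot 2^n - 1$ strictly exceeds each prime $p_j$ for all $n$; there are infinitely many such $t$. Oddness of $k_b^{(t)}$: since $k$ is odd, if $b$ is odd then $k_b^{(t)}=k(1+b^\ell+\dots)$ is odd iff $t$ is odd, which is consistent with $t\equiv 1\pmod{\lcm n_j}$ after possibly also imposing $t$ odd; if $b$ is even, $\ell\ge 1$ and $k_b^{(t)}=k + k b^\ell(\cdots)$ is odd automatically since the trailing "digit" $k$ is odd and every higher term is even. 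Then for every positive integer $n$, pick the congruence $r_j\pmod{m_j}$ of $\mathcal{C}$ satisfied by $n$; we get $p_j \mid k_b^{(t)}\cdot 2^n-1$ and $k_b^{(t)}\cdot 2^n-1 > p_j$, so it is composite. Hence $k_b^{(t)}$ is a Riesel number, and since there are infinitely many admissible $t$, we are done.

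The main obstacle is the bookkeeping around the quotient $\frac{b^{\ell t}-1}{b^\ell-1}$ modulo $p_j$ and making sure the "set $t\equiv 1$ modulo something" conditions are simultaneously satisfiable with the oddness requirement and with $p_j\nmid b$ — but $p_j\mid 2^{m_j}-1$ forces $p_j$ odd, and if $p_j\mid b$ then $k_b^{(t)}\equiv k\pmod{p_j}$ is immediate from the base-$b$ expansion (only the last block contributes mod $p_j$), so that case is actually easier; the genuinely delicate sub-case is $p_j\mid b^\ell-1$, handled as above. I expect the whole proof to parallel the proof of Theorem~\ref{thm:whendigitissierpinski} line for line with $k\cdot 2^n+1$ replaced by $k\cdot 2^n-1$ and $-2^{-r_j}$ replaced by $2^{-r_j}$ throughout.
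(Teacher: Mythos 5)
Your proposal is correct and follows essentially the same route as the paper, which proves the Sierpi\'nski version (Theorem~\ref{thm:whendigitissierpinski}) by imposing $t\equiv 1$ modulo $\ord_{p_j}(b)$ for each $p_j\nmid b$ so that $k_b^{(t)}\equiv k\pmod{p_j}$, and simply notes that the Riesel case is identical with $k\cdot 2^n+1$ and $-2^{-r_j}$ replaced by $k\cdot 2^n-1$ and $2^{-r_j}$. In fact you are slightly more careful than the printed argument: your separate treatment of the sub-case $p_j\mid b^{\ell}-1$ (where the quotient $(b^{\ell t}-1)/(b^{\ell}-1)$ reduces to $t$ rather than $1$ modulo $p_j$, so one should impose $t\equiv 1\pmod{p_j}$) patches a step that the paper's proof passes over by formally cancelling $b^{\ell}-1$ in the numerator and denominator.
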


\begin{proof}[Proof of Theorem~$\ref{thm:whendigitissierpinski}$]
Let $\mathcal{C}=\{r_j\pmod{m_j}\}$ be the covering system that produces $k$ as a Sierpi\'nski number. Further let $p_j$ be a primitive prime divisor of $2^{m_j}-1$ such that $k\equiv-2^{-r_j}\pmod{p_j}$ for each $j$. Let $t$ be a positive integer such that $t\equiv 1\pmod{\text{ord}_{p_j}(b)}$ for each $p_j$ that does not divide $b$, where $\ord_{p_j}(b)$ denotes the order of $b$ modulo $p_j$. We claim that $k_b^{(t)}\equiv k\pmod{p_j}$ for all $j$. Note that the congruence in the claim holds trivially if $p_j$ divides $b$, and if $p_j$ does not divide $b$, then $k_b^{(t)}\equiv k(b^\ell-1)/(b^\ell-1)\equiv k\pmod{p_j}$, where $\ell=\lfloor\log_b(k)\rfloor+1$.

By \eqref{eq:sierpinskicongruence}, when $n\equiv r_j\pmod{m_j}$, $k_b^{(t)}\cdot 2^n+1\equiv k\cdot 2^n+1\equiv 0\pmod{p_j}$. Also, we have $k_b^{(t)}\cdot 2^n+1\geq k\cdot 2^n+1>p_j$. Since $\mathcal{C}$ is a covering system, this ensures that $k_b^{(t)}\cdot 2^n+1$ is composite for all positive integers $n$. Further requiring $t\equiv 1\pmod{2}$ ensures that $k_b^{(t)}$ is an odd integer, and thus a Sierpi\'nski number.
\end{proof}

Besides establishing the existence of infinitely $b$-repinteger Sierpi\'nski numbers for any $b\geq2$, Theorem~\ref{thm:whendigitissierpinski} also provides a bound to the answer to Question~\ref{questions:smallrepintegerbase2} by showing that $\kappa\leq78557$, the smallest known Sierpi\'nski number. We further improve this bound on $\kappa$ via the following theorem.

\begin{theorem}\label{thm:smallrepintegerbase2sierpinski}
Let $t\equiv 25\pmod{56}$. Then $18107_2^{(t)}$ is a $2$-repinteger Sierpi\'nski number.
\end{theorem}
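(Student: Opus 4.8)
The plan is to exhibit an explicit covering system $\mathcal{C}$ together with a choice of primitive prime divisors $p_j$ and verify that $k=18107$ satisfies the Sierpi\'nski congruences \eqref{eq:sierpinskicongruence} for this system, and \emph{simultaneously} that $18107_2^{(t)}\equiv 18107 \pmod{p_j}$ for all $j$ whenever $t\equiv 25\pmod{56}$. Since $18107$ in binary has $\ell=\lfloor\log_2(18107)\rfloor+1=15$ bits, we have $18107_2^{(t)}=18107\cdot(2^{15t}-1)/(2^{15}-1)$, so the second requirement amounts to $(2^{15t}-1)/(2^{15}-1)\equiv 1\pmod{p_j}$, i.e. $2^{15t}\equiv 2^{15}\pmod{p_j}$, which holds as soon as $15(t-1)\equiv 0\pmod{\ord_{p_j}(2)}$; since $p_j\mid 2^{m_j}-1$ we have $\ord_{p_j}(2)\mid m_j$, so it suffices that $15(t-1)\equiv 0\pmod{m_j}$ for every $j$. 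With $t\equiv 25\pmod{56}$ we get $t-1\equiv 24\pmod{56}$, hence $15(t-1)$ is divisible by $15\cdot 24/\gcd\ldots$; more usefully, $15(t-1)\equiv 0\pmod{840}$, so any covering system all of whose moduli divide $840$ (equivalently $\mathrm{lcm}$ of moduli equal to $840=2^3\cdot 3\cdot 5\cdot 7$) will work, provided also the oddness condition and the ``sufficiently large'' condition are met — and $18107$ being odd and larger than all the primes involved takes care of the latter two.

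First I would fix the covering system. A natural candidate is one with $L=\operatorname{lcm}$ of moduli equal to $840$; the classical Erd\H os-style covering using moduli dividing $2^3\cdot 3\cdot 5\cdot 7$ is a good starting point, and one must choose the residues $r_j$ and then, for each $j$, a \emph{primitive} prime divisor $p_j$ of $2^{m_j}-1$ with all the $p_j$ distinct (Theorem~\ref{thm:bang} guarantees these exist except for $m=6$, which can be avoided or handled by using a non-primitive divisor carefully). Second, for each congruence $r_j\pmod{m_j}$ in $\mathcal{C}$ I would compute $-2^{-r_j}\bmod p_j$ and check that the single residue class $k\equiv 18107$ is consistent with \emph{all} of these congruences simultaneously — i.e. that $18107\equiv -2^{-r_j}\pmod{p_j}$ for every $j$. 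This is the crux: the specific number $18107$ was presumably reverse-engineered so that the Chinese Remainder Theorem system on $k$ has $18107$ as its smallest odd positive solution, so the verification is a finite check, one congruence per element of the covering. Third, I would note $18107$ is odd, invoke \eqref{eq:sierpinskicongruence} to conclude $18107_2^{(t)}\cdot 2^n+1\equiv 0\pmod{p_j}$ whenever $n\equiv r_j\pmod{m_j}$ (using the claim $18107_2^{(t)}\equiv 18107\pmod{p_j}$ established above via $t\equiv 25\pmod{56}\Rightarrow 15(t-1)\equiv 0\pmod{m_j}$), and since $18107_2^{(t)}\cdot 2^n+1> p_j$ always, primality fails for every $n$; hence $18107_2^{(t)}$ is a Sierpi\'nski number, and it is trivially a $2$-repinteger by construction.

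The main obstacle is finding — or reconstructing — the exact covering system and prime assignment that makes $18107$ work: one needs a covering $\mathcal{C}$ whose moduli all divide $840$ such that the induced CRT conditions on $k$ from \eqref{eq:sierpinskicongruence} are jointly satisfied by $k=18107$. In practice this is a search (or a table lookup from the paper's own construction) rather than a theoretical difficulty, but presenting it cleanly requires displaying the covering system explicitly as a table of triples $(r_j, m_j, p_j)$ and then performing the finite modular verification that $18107\equiv -2^{-r_j}\pmod{p_j}$ in each row, analogous to the display \eqref{eq:sierpinskidemo} in Section~\ref{sec:coveringsystems}. Everything after that — the order argument reducing the repinteger condition to $15(t-1)\equiv 0\pmod{840}$, the oddness of $18107$, and the size bound — is routine.
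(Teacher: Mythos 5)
Your proposal has a genuine gap, and it is fatal to the strategy rather than a fixable detail. You plan to (a) verify that $18107\equiv-2^{-r_j}\pmod{p_j}$ for every congruence in some covering system, and (b) show $18107_2^{(t)}\equiv 18107\pmod{p_j}$ by forcing $2^{15t}\equiv 2^{15}\pmod{p_j}$. Step (a) cannot succeed for any covering system: if $18107$ satisfied all the covering congruences, then $18107$ itself would be a Sierpi\'nski number, but $18107<78557$ and is not a Sierpi\'nski number (a prime of the form $18107\cdot 2^n+1$ is known). Concretely, for the covering system \eqref{eq:sierpinskidemo} the congruence attached to $(r,m,p)=(0,3,7)$ requires $k\equiv -2^{0}\equiv 6\pmod{7}$, whereas $18107\equiv 5\pmod 7$. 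So the ``finite check'' you describe as the crux fails in its very first nontrivial row, no matter how you reverse-engineer the covering.

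Step (b) contains the related technical error that explains why the theorem is nevertheless true. You argue that $2^{15t}\equiv 2^{15}\pmod{p_j}$ implies $(2^{15t}-1)/(2^{15}-1)\equiv 1\pmod{p_j}$, but this requires $2^{15}-1$ to be invertible modulo $p_j$, and $2^{15}-1=7\cdot 31\cdot 151$ is divisible by $7$. Modulo $7$ one instead has $18107_2^{(t)}=18107\sum_{i=0}^{t-1}2^{15i}\equiv 18107\cdot t\equiv 5\cdot 25\equiv 6\pmod 7$, which is exactly the residue $-2^{0}$ the covering demands. This nontrivial multiplier $t\bmod 7$ is the whole point: the repinteger $18107_2^{(t)}$ is \emph{not} congruent to $18107$ modulo all the relevant primes, and the choice $t\equiv 25\pmod{56}$ is what steers it into the Sierpi\'nski residue class. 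The paper's proof proceeds this way: it fixes the covering system of \eqref{eq:sierpinskidemo}, notes that any $k\equiv 8007257\pmod{11184810}$ is a Sierpi\'nski number, observes $2^{15\cdot 56}\equiv 1\pmod p$ for each prime $p$ involved so that $18107_2^{(t)}\bmod p$ depends only on $t\bmod 56$, and then verifies computationally that $18107_2^{(25)}\equiv 8007257\pmod{11184810}$. To repair your write-up you would need to replace the target residue $18107$ by $8007257$ and compute $18107\cdot(2^{15t}-1)/(2^{15}-1)$ modulo each prime directly (summing the geometric series when the prime divides $2^{15}-1$), rather than reducing to the identity multiplier.
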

\begin{proof}
Let
$$\{(r_j,m_j,p_j):1\leq j\leq 6\}=\{(0,2,3),(0,3,7),(1,4,5),(3,8,17),(11,12,13),(7,24,241)\}.$$
Notice that $\mathcal{C}=\{r_j\pmod{m_j}:1\leq j\leq 6\}$ is the covering system provided in equation~\eqref{eq:sierpinskidemo}. This establishes that a positive integer $k$ is a Sierpi\'nski number if $k\equiv 8007257\pmod{11184810}$, where $11184810=2\cdot\prod_{j=1}^6p_j$. Hence, it remains to show that $18107_2^{(t)}$ satisfies this congruence.

Notice that $\ell=\lfloor\log_2(18107)\rfloor+1=15$. For $p\in\{3,7,5,17,13,241\}$, we have $2^{15\cdot 56}\equiv 1\pmod{p}$. Therefore, when $t\equiv 25\pmod{56}$, $2^{15\cdot t}\equiv 2^{15\cdot 25}\pmod{p}$. With this, we can check computationally that $18107_2^{(t)}=18107\cdot(2^{15\cdot t}-1)/(2^{15}-1)\equiv 8007257\pmod{p}$ for $p\in\{2,3,7,5,17,13,241\}$, establishing the desired congruence. 
\end{proof}

Analogously, Theorem~\ref{thm:whendigitisriesel} provides a bound $\kappa'\leq509203$, the smallest known Riesel number, and this bound on $\kappa'$ is improved by the following theorem. We present this theorem without a proof, as it will follow as a corollary to Theorems~\ref{thm:smallrepintegerbase2sierpinski} and \ref{thm:additiveinverse}.

\begin{theorem}\label{thm:smallrepintegerbase2riesel}
Let $t\equiv 31\pmod{56}$. Then $18107_2^{(t)}$ is a $2$-repinteger Riesel number.
\end{theorem}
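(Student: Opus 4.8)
The plan is to obtain Theorem~\ref{thm:smallrepintegerbase2riesel} as a formal consequence of Theorem~\ref{thm:smallrepintegerbase2sierpinski} together with the Sierpi\'nski--Riesel ``additive inverse'' correspondence of Theorem~\ref{thm:additiveinverse}, following the template of the proof of Theorem~\ref{thm:smallrepintegerbase2sierpinski} but with the covering system $\mathcal C$ of~\eqref{eq:sierpinskidemo} replaced by its translate by $\ell=\lfloor\log_2(18107)\rfloor+1=15$.

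Concretely, I would first recall from the proof of Theorem~\ref{thm:smallrepintegerbase2sierpinski} that, with $\{(r_j,m_j,p_j):1\le j\le 6\}=\{(0,2,3),(0,3,7),(1,4,5),(3,8,17),(11,12,13),(7,24,241)\}$ and $s\equiv 25\pmod{56}$, the repinteger $18107_2^{(s)}$ satisfies $18107_2^{(s)}\equiv -2^{-r_j}\pmod{p_j}$ for every $j$. Since translating every congruence of a covering system by a fixed integer again produces a covering system, the collection $\{r_j+15\pmod{m_j}:1\le j\le 6\}$ --- namely $\{1\pmod 2,\,0\pmod 3,\,0\pmod 4,\,2\pmod 8,\,2\pmod{12},\,22\pmod{24}\}$ --- is again a covering system. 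By~\eqref{eq:rieselcongruence}, to certify $18107_2^{(t)}$ as a Riesel number by means of this translated covering it then suffices to verify $18107_2^{(t)}\equiv 2^{-(r_j+15)}\pmod{p_j}$ for all $j$; multiplying through by $2^{r_j}$ and using the Sierpi\'nski congruence $18107_2^{(s)}\equiv -2^{-r_j}\pmod{p_j}$, this is equivalent to the single relation $18107_2^{(t)}\cdot 2^{15}\equiv -18107_2^{(s)}\pmod{p_1p_2\cdots p_6}$, which is precisely the hypothesis needed to invoke Theorem~\ref{thm:additiveinverse}.

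It then remains to check this relation, and here I would reuse the computation from the proof of Theorem~\ref{thm:smallrepintegerbase2sierpinski}: for $p\in\{3,5,13,17,241\}$ one has $2^{15}\not\equiv 1\pmod p$, so $18107_2^{(t)}\equiv 18107\,(2^{15t}-1)/(2^{15}-1)\pmod p$, and since $2^{15\cdot 56}\equiv 1\pmod p$ this value depends only on $t\bmod 56$; for the remaining prime $p=7$ we have $7\mid 2^{15}-1$, so the closed form degenerates and one must instead use $18107_2^{(t)}\equiv 18107\sum_{i=0}^{t-1}2^{15i}\equiv 18107\,t\pmod 7$, which depends on $t\bmod 7$. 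Tracing the congruences through, the relation holds exactly when $t\equiv -25\pmod{56}$, that is, when $t\equiv 31\pmod{56}$ --- the ``additive inverse'' of the residue $25$ governing the Sierpi\'nski case, with $25+31=56$. This congruence class in particular forces $t$ to be odd, hence $18107_2^{(t)}$ to be odd; and for every positive integer $n$ the translated covering places $n$ in some class $r_j+15\pmod{m_j}$, so $18107_2^{(t)}\cdot 2^n-1\equiv 0\pmod{p_j}$ while $18107_2^{(t)}\cdot 2^n-1>p_j$, making the number composite. Together these show $18107_2^{(t)}$ is a Riesel number.

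The main obstacle I anticipate is the modular bookkeeping in the preceding paragraph: one must carefully track the ``additive inverse'' twist combined with the translation of the covering by $\ell=15$, and in particular handle the exceptional prime $7$ separately, since $7\mid 2^{15}-1$ makes the standard repinteger formula $k(2^{\ell t}-1)/(2^{\ell}-1)$ indeterminate modulo $7$. The appearance of the modulus $56=\lcm(8,7)$ in the statement is explained by exactly this split: $8=24/\gcd(15,24)$ is the period contributed by the five nondegenerate primes and $7$ is the period contributed by the exceptional prime. Once this is organized correctly, confirming that $t\equiv 31\pmod{56}$ meets the hypothesis of Theorem~\ref{thm:additiveinverse} is a routine finite computation of the same nature as the one already performed for Theorem~\ref{thm:smallrepintegerbase2sierpinski}.
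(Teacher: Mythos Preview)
Your high-level plan matches the paper exactly: the paper states the theorem ``without a proof, as it will follow as a corollary to Theorems~\ref{thm:smallrepintegerbase2sierpinski} and \ref{thm:additiveinverse}.'' Applying Theorem~\ref{thm:additiveinverse} with $k=18107$, $z=0$, $\mathfrak t=25$, $w=56$ and using Theorem~\ref{thm:smallrepintegerbase2sierpinski} as input immediately yields the Riesel statement for $t'\equiv -25\equiv 31\pmod{56}$; nothing further is required.

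Where your write-up drifts is in the sentence ``which is precisely the hypothesis needed to invoke Theorem~\ref{thm:additiveinverse}.'' Theorem~\ref{thm:additiveinverse} has no such hypothesis: it is a biconditional whose forward direction applies directly once Theorem~\ref{thm:smallrepintegerbase2sierpinski} is in hand. What you have actually written in the subsequent paragraphs is a \emph{direct} proof that bypasses Theorem~\ref{thm:additiveinverse} entirely --- you translate the covering by $15$, reduce to the relation $18107_2^{(t)}\cdot 2^{15}\equiv -18107_2^{(s)}\pmod{p_j}$, and then verify it prime by prime (including the degenerate case $p=7$). This is correct and is essentially the proof of Theorem~\ref{thm:additiveinverse} specialized to the present parameters; note also that your shift of $15$ agrees with the paper's shift of $(z+\ell)\mathfrak t=375$ only because all moduli divide $24$ and $375\equiv 15\pmod{24}$. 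So your argument is sound, but it does more work than necessary and mislabels the role of Theorem~\ref{thm:additiveinverse}.
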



Noting that a $b$-repinteger $k_b^{(t)}=k(b^{\ell t}-1)/(b^\ell-1)$ with $\ell=\lfloor\log_b(k)\rfloor+1$ is a $b^\ell$-repdigit and recalling that $78557$ and $509203$ are the smallest known Sierpi\'nski number and Riesel number, respectively, we have the following corollaries of Theorems~\ref{thm:whendigitissierpinski} and \ref{thm:whendigitisriesel}.

\begin{corollary}\label{cor:sierpinskib}
For all integers $b>78557$, there exist infinitely many $b$-repdigit Sierpi\'nski numbers.
\end{corollary}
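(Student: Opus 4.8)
The plan is to deduce Corollary~\ref{cor:sierpinskib} directly from Theorem~\ref{thm:whendigitissierpinski} by observing that a $b$-repinteger is, in an appropriate sense, already a repdigit in a larger base. Concretely, let $b > 78557$ be an integer. Since $78557$ is a Sierpi\'nski number and it was established (by Selfridge, via the covering system in Section~\ref{sec:coveringsystems}, the same one recorded in equation~\eqref{eq:sierpinskidemo} up to the choice of $k$) using the covering system method, Theorem~\ref{thm:whendigitissierpinski} applies with $k = 78557$. Therefore there exist infinitely many positive integers $t$ such that $78557_b^{(t)}$ is a Sierpi\'nski number.

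Next I would unpack the definition of the $b$-repinteger $78557_b^{(t)}$. By definition $78557_b^{(t)} = 78557 \cdot (b^{\ell t} - 1)/(b^\ell - 1)$ where $\ell = \lfloor \log_b(78557) \rfloor + 1$. Here the key point is that because $b > 78557$, we have $\lfloor \log_b(78557) \rfloor = 0$, so $\ell = 1$, and hence $78557_b^{(t)} = 78557 \cdot (b^t - 1)/(b - 1)$. In other words, in base $b$ this number is simply the digit string consisting of $78557$ written once followed by $(t-1)$ copies of $78557$ — but since $78557 < b$, "$78557$" is a single base-$b$ digit, so $78557_b^{(t)}$ is precisely the $b$-repdigit with digit $78557$ repeated $t$ times. (This is exactly the parenthetical remark preceding the corollary, specialized to $\ell = 1$: a $b$-repinteger with $\ell = 1$ is a $b$-repdigit, not merely a $b^\ell$-repdigit.) It remains only to note that the digit $k = 78557$ satisfies the constraint $k < b$ required in the definition of a $b$-repdigit, which holds precisely because we assumed $b > 78557$.

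Combining these observations: for each of the infinitely many $t$ furnished by Theorem~\ref{thm:whendigitissierpinski}, the number $78557_b^{(t)}$ is simultaneously a Sierpi\'nski number and a $b$-repdigit, so there are infinitely many $b$-repdigit Sierpi\'nski numbers. I do not anticipate a substantive obstacle here; the entire content is the elementary observation that $b > 78557$ forces $\ell = 1$ in the repinteger formula, collapsing "repinteger" to "repdigit," together with the (mild, but worth stating explicitly) fact that $78557$ is producible by the covering system method so that Theorem~\ref{thm:whendigitissierpinski} is actually applicable with that value of $k$. The only place one must be slightly careful is to confirm that the covering system that certifies $78557$ as a Sierpi\'nski number is genuinely of the form described in Section~\ref{sec:coveringsystems} (with distinct primes $p_j$ and the congruence structure of \eqref{eq:sierpinskicongruence}); this is classical, so I would cite it rather than reprove it.
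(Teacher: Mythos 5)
Your argument is correct and matches the paper's intended derivation: the remark preceding the corollary is exactly that a $b$-repinteger is a $b^\ell$-repdigit, and for $k=78557<b$ one has $\ell=\lfloor\log_b(78557)\rfloor+1=1$, so Theorem~\ref{thm:whendigitissierpinski} applied with $k=78557$ directly yields infinitely many $b$-repdigit Sierpi\'nski numbers. One small correction to your parenthetical: Selfridge's certification of $78557$ uses a covering system with primes $\{3,5,7,13,19,37,73\}$, not the system $\mathcal{C}_1$ of equation~\eqref{eq:sierpinskidemo} (indeed $78557\not\equiv 8007257\pmod{11184810}$), but this is immaterial since Theorem~\ref{thm:whendigitissierpinski} only requires that $k$ be produced by \emph{some} covering system via the method of Section~\ref{sec:coveringsystems}.
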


\begin{corollary}
For all integers $b>509203$, there exist infinitely many $b$-repdigit Riesel numbers.
\end{corollary}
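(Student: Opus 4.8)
The plan is to obtain this as an immediate consequence of Theorem~\ref{thm:whendigitisriesel} applied to the smallest known Riesel number $k=509203$. First I would recall that $509203$ is a Riesel number produced by Riesel's original covering system construction (a covering system whose moduli divide $24$, with an associated set of primes dividing the numbers $2^{m_j}-1$), so that the hypothesis of Theorem~\ref{thm:whendigitisriesel} is met with this value of $k$.

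Next I would unwind the definitions for $b>509203$. Since $k=509203<b$, we have $\ell=\lfloor\log_b(k)\rfloor+1=1$, so the $b$-repinteger $k_b^{(t)}=k(b^{\ell t}-1)/(b^\ell-1)$ collapses to $k(b^t-1)/(b-1)$, which is precisely the digit $k$ written $t$ times in base $b$. In other words, under the paper's convention that a single symbol $k<b$ is one base-$b$ digit, $k_b^{(t)}$ is a $b$-repdigit for every positive integer $t$, and distinct values of $t$ give repdigits of distinct lengths and hence distinct values.

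Then I would invoke Theorem~\ref{thm:whendigitisriesel} with this $b$: because $k=509203$ was constructed via the covering system method, there are infinitely many positive integers $t$ for which $k_b^{(t)}$ is a Riesel number. By the previous paragraph each such $k_b^{(t)}$ is a $b$-repdigit, so we obtain infinitely many $b$-repdigit Riesel numbers, as claimed.

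I do not expect a genuine obstacle here; the only points needing a moment's care are the bookkeeping that $\ell=1$ whenever $b>k$ (so that the notions of ``$b$-repinteger with digit $k$'' and ``$b$-repdigit with digit $k$'' coincide) and the standard fact that $509203$ arises from a covering system, which is what allows Theorem~\ref{thm:whendigitisriesel} to apply. The Sierpi\'nski analogue, Corollary~\ref{cor:sierpinskib}, is proved in exactly the same way, replacing $509203$ by $78557$ and Theorem~\ref{thm:whendigitisriesel} by Theorem~\ref{thm:whendigitissierpinski}.
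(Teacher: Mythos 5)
Your proposal is correct and matches the paper's own (implicit) argument: the paper derives this corollary from Theorem~\ref{thm:whendigitisriesel} by observing that a $b$-repinteger $k_b^{(t)}$ is a $b^\ell$-repdigit, which for $k=509203$ and $b>509203$ gives $\ell=1$ and hence a genuine $b$-repdigit. Your extra remarks (distinct $t$ give distinct repdigits, and $509203$ arises from the standard covering with moduli dividing $24$) are exactly the bookkeeping the paper leaves unstated.
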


Corollary~\ref{cor:sierpinskib} provides a bound to the answer to Question~\ref{question:smallrepdigitbase} by showing that $\beta_2\leq78858$, and we significantly improve this bound to $\beta_2\leq87$ via the next theorem. Similarly, we establish in Theorem~\ref{thm:smallrepdigitbaseriesel} that $\beta_2'\leq 180$.

\begin{theorem}\label{thm:smallrepdigitbase}
Let $b\geq 2$ be an integer with $b\equiv 87\pmod{11184810}$ and let $t$ be a positive integer with $t\equiv 59\pmod{120}$. Then $41_b^{(t)}$ is a $b$-repdigit Sierpi\'nski number.
\end{theorem}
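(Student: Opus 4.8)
The plan is to follow the template of Theorem~\ref{thm:smallrepintegerbase2sierpinski}, but now tracking the base $b$ as an additional parameter living in a residue class modulo $11184810 = 2\cdot\prod_{j=1}^6 p_j$ with $\{p_j\} = \{3,5,7,13,17,241\}$. First I would recall, exactly as in that proof, that the covering system $\mathcal{C}_1 = \{0\pmod 2, 0\pmod 3, 1\pmod 4, 3\pmod 8, 11\pmod{12}, 7\pmod{24}\}$ with associated primes $\{3,7,5,17,13,241\}$ shows that \emph{any} positive integer $N$ with $N \equiv 8007257 \pmod{11184810}$ is a Sierpi\'nski number (via \eqref{eq:sierpinskicongruence}, together with $N\cdot 2^n+1 > 241$). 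So the entire task reduces to verifying that $41_b^{(t)} = 41\cdot(b^{2t}-1)/(b^2-1)$ satisfies $41_b^{(t)} \equiv 8007257 \pmod{p}$ for each $p\in\{2,3,5,7,13,17,241\}$, under the hypotheses $b\equiv 87\pmod{11184810}$ and $t\equiv 59\pmod{120}$. (Here $\ell = \lfloor\log_b 41\rfloor + 1 = 2$, valid because $b\geq 2$ and $6 \le 41 < b^2$ when $b$ is in the stated class — in fact $b \ge 87$ so $\ell = 2$ genuinely; I should note $41 < b^2$ holds since $b \ge 87$.)

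The key reduction step is to replace $b$ and $t$ by fixed small representatives modulo each prime. For $p \in \{2,3,5,7,13,17,241\}$, since $b\equiv 87 \pmod{11184810}$ and $p \mid 11184810$, we have $b \equiv 87 \pmod p$; so $b^2 \bmod p$, and hence the invertible element $(b^2-1)^{-1} \bmod p$ when $p\nmid b^2-1$, are determined. One checks $b^2 - 1 = 86\cdot 88 = 7568$, and $\gcd(7568, 11184810/2) $ — I would verify $7568 = 2^4\cdot 11\cdot 43$, which is coprime to each of $3,5,7,13,17,241$, so $(b^2-1)^{-1}$ exists modulo every odd prime in our list. Next, for each odd $p$ in the list, $b^2 = 87^2$ has some multiplicative order $d_p \mid \ord_p(87^2)$ dividing $\mathrm{lcm}$ of these, and the point of choosing $t \equiv 59 \pmod{120}$ is that $120$ is a common multiple of all the relevant orders $\ord_p(b^2)$ (a quick check: $\ord_p(2)$ divides the exponents appearing, and $120 = 2^3\cdot 3\cdot 5$ should absorb every $\ord_p(87^2)$ for $p\in\{3,5,7,13,17,241\}$). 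Consequently $b^{2t} \equiv b^{2\cdot 59} = 87^{118} \pmod p$ for all these $p$, reducing the claim to the finite check $41\cdot(87^{118}-1)/(7568) \equiv 8007257 \pmod p$ for each $p$, plus the trivial check mod $2$ ($41_b^{(t)}$ is odd since it is a sum of $t$ odd powers... more precisely $41_b^{(t)} \equiv 41 t \equiv 1\cdot 59 \equiv 1\pmod 2$ when $b$ is odd, which it is since $87$ is odd and $11184810$ is even).

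The main obstacle is purely the bookkeeping: I must confirm that $120$ really is a multiple of $\ord_p(b^2)$ for every $p\in\{3,5,7,13,17,241\}$ — equivalently that $87^{118}\equiv 87^{-2}\cdot 87^{120}\equiv 87^{-2}\pmod p$... no, rather that $87^{120}\equiv 1\pmod p$, i.e. $\ord_p(87)\mid 120$ (or $\ord_p(87^2)\mid 120$). Since $\ord_p(2)$ is the governing quantity in the original covering (with $\ord_3(2)=2$, $\ord_7(2)=3$, $\ord_5(2)=4$, $\ord_{17}(2)=8$, $\ord_{13}(2)=12$, $\ord_{241}(2)=24$), and $87 = 2^?$-ish behavior is not automatic, I would instead directly compute each $\ord_p(87)$: these all divide $p-1$, and I expect them to divide $120 = \mathrm{lcm}(1,2,3,4,5,6,8,12,\ldots)$ after a short calculation (e.g. $\ord_{241}(87)\mid 240$; I need it to divide $120$, which requires $87^{120}\equiv 1 \pmod{241}$, a one-line modular exponentiation). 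Assuming that verification goes through — and it must, since the theorem is asserted — the remainder is the Chinese Remainder Theorem assembly: the seven congruences $41_b^{(t)}\equiv 8007257\pmod p$ pin down $41_b^{(t)} \bmod 11184810$ to the single residue $8007257$, and then sufficiently large $t$ (any $t\equiv 59\pmod{120}$, $t\ge 1$, automatically makes $41_b^{(t)} > 241$) delivers a genuine Sierpi\'nski number, with infinitely many such $t$. I would close by remarking that $b\equiv 87\pmod{11184810}$ in particular admits $b = 87$ itself, giving $\beta_2 \le 87$.
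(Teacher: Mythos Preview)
There are two genuine gaps.

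First, your computation of $\ell$ is wrong. Since $b\equiv 87\pmod{11184810}$ forces $b\ge 87>41$, the digit $41$ is a single base-$b$ digit, so by the repdigit definition $41_b^{(t)}=41(b^t-1)/(b-1)$, i.e.\ $\ell=\lfloor\log_b 41\rfloor+1=0+1=1$, not $2$. Your formula $41(b^{2t}-1)/(b^2-1)$ is simply not the quantity in the theorem.

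Second, and more seriously, even after correcting to $\ell=1$ your target residue class is wrong: it is \emph{not} true that $41_b^{(t)}\equiv 8007257\pmod{11184810}$. For instance, modulo $7$ one has $b\equiv 87\equiv 3$, $\ord_7(3)=6$, $t\equiv 59\equiv 5\pmod 6$, so $b^t\equiv 3^5\equiv 5$ and
\[
41_b^{(t)}\equiv 41\cdot\frac{5-1}{3-1}\equiv 6\cdot 2\equiv 5\pmod 7,
\]
whereas $8007257\equiv 6\pmod 7$. So the Sierpi\'nski residue class coming from $\mathcal{C}_1$ does not capture $41_b^{(t)}$, and reducing to Theorem~\ref{thm:smallrepintegerbase2sierpinski} verbatim fails.

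The paper's proof uses the same moduli and primes but \emph{shifts the residues} to a different covering
\[
\{(0,2,3),(2,3,7),(1,4,5),(7,8,17),(7,12,13),(3,24,241)\},
\]
chosen so that $-2^{-r_j}\pmod{p_j}$ matches the actual residue of $41_b^{(t)}$; e.g.\ for $p=7$ one gets $-2^{-2}\equiv 5\pmod 7$, agreeing with the computation above. The paper also handles $p_1=3$ separately (since $3\mid 87$, so $b\equiv 0\pmod 3$ and the geometric sum collapses to $41\equiv 2\equiv -2^{0}\pmod 3$), and then verifies $87^{120}\equiv 1\pmod{p_j}$ and $41(87^{59}-1)/86\equiv -2^{-r_j}\pmod{p_j}$ for the remaining primes. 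Your outline can be salvaged, but only after replacing both the formula for $41_b^{(t)}$ and the covering system.
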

\begin{proof}
Let
$$\{(r_j,m_j,p_j):1\leq j\leq 6\}=\{(0,2,3),(2,3,7),(1,4,5),(7,8,17),(7,12,13),(3,24,241)\}.$$
Notice that $\mathcal{C}=\{r_j\pmod{m_j}:1\leq j\leq 6\}$ is a covering system. We claim that $41_b^{(t)}\equiv-2^{-r_j}\pmod{p_j}$ for all $1\leq j\leq6$. Note that the congruence in the claim holds when $j=1$ since $p_1=3$ divides $b$. For $2\leq j\leq6$, the congruence holds since $41(87^{59}-1)/(87-1)\equiv-2^{r_j}\pmod{p_j}$ and $87^{120}\equiv1\pmod{p_j}$. Therefore, $41_b^{(t)}$ is a Sierpi\'nski number by \eqref{eq:sierpinskicongruence}.
\end{proof}

\begin{theorem}\label{thm:smallrepdigitbaseriesel}
Let $b\geq 2$ be an integer with $b\equiv 180\pmod{11184810}$ and let $t$ be a positive integer with $t\equiv 171\pmod{240}$. Then $101_b^{(t)}$ is a $b$-repdigit Riesel number.
\end{theorem}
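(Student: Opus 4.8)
\textbf{Proof proposal for Theorem~\ref{thm:smallrepdigitbaseriesel}.}

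The plan is to mirror exactly the structure of the proof of Theorem~\ref{thm:smallrepdigitbase}, but using the Riesel congruence \eqref{eq:rieselcongruence} in place of the Sierpi\'nski congruence \eqref{eq:sierpinskicongruence}. First I would exhibit a covering system $\mathcal{C}=\{r_j\pmod{m_j}:1\leq j\leq \tau\}$ together with a choice of primitive prime divisor $p_j$ of $2^{m_j}-1$ for each $j$ (all distinct, which is where Bang's theorem, Theorem~\ref{thm:bang}, guarantees feasibility), so that $\prod_{j}p_j$ is a divisor of (or equal to, up to the factor $2$) $11184810$; the modulus $11184810=2\cdot 3\cdot 7\cdot 5\cdot 17\cdot 13\cdot 241$ strongly suggests using the same six primes $\{3,7,5,17,13,241\}$ as in the earlier theorems, just with different residues $r_j$ tuned so that the relevant congruence on $k$ can be satisfied by $101_b^{(t)}$. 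By \eqref{eq:rieselcongruence}, a positive integer $k$ with $k\equiv 2^{-r_j}\pmod{p_j}$ for all $j$ (and $k$ odd, i.e.\ $k\equiv1\pmod2$) is a Riesel number once $k$ is large enough that $k\cdot2^n-1$ exceeds every $p_j$, which is automatic here since $101_b^{(t)}$ is enormous.

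The core of the proof is then the claim that $101_b^{(t)}\equiv 2^{-r_j}\pmod{p_j}$ for each $j$, under the hypotheses $b\equiv 180\pmod{11184810}$ and $t\equiv 171\pmod{240}$. I would split into two cases exactly as before. When $p_j\mid b$ — here $180=2^2\cdot3^2\cdot5$, so this covers $p_j\in\{3,5\}$ (and the factor $2$) — the repdigit $101_b^{(t)}=101\cdot(b^t-1)/(b-1)$ has the factor $b^t$ in a form making it $\equiv 101\pmod{p_j}$ after dividing by $b-1$; more carefully, since $b\equiv 180\pmod{p_j\cdot(\text{stuff})}$ one computes $101_b^{(t)}\bmod p_j$ directly and checks it equals $2^{-r_j}$, which is why the residue $180$ (not merely $87$) was chosen. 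When $p_j\nmid b$, I would use that $\ord_{p_j}(b)$ divides $240$ for each such $p_j$ — this is the arithmetic fact to verify, that $b^{240}\equiv1\pmod{p_j}$ when $b\equiv180$, equivalently $180^{240}\equiv1$ modulo each of the surviving primes — so that $t\equiv171\pmod{240}$ forces $b^t\equiv b^{171}\pmod{p_j}$, whence $101_b^{(t)}\equiv 101(180^{171}-1)/(180-1)\pmod{p_j}$, a fixed residue that one checks computationally equals $2^{-r_j}\pmod{p_j}$. Combining the two cases with \eqref{eq:rieselcongruence} and the covering property of $\mathcal{C}$ shows $101_b^{(t)}\cdot2^n-1$ is divisible by some $p_j<101_b^{(t)}\cdot2^n-1$ for every $n$, and the congruence $t\equiv171\pmod{240}$ being odd makes $101_b^{(t)}$ odd; hence it is a Riesel number.

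The main obstacle is purely a search/bookkeeping problem rather than a conceptual one: one must locate residues $r_j$ for the six moduli $2,3,4,8,12,24$ such that (i) they form a genuine covering system, (ii) the forced residues $2^{-r_j}\pmod{p_j}$ on $k$ are simultaneously realized by the specific number $101_b^{(t)}$ for $b\equiv180$, $t\equiv171\pmod{240}$, and (iii) the digit $101$ and the base-residue $180$ are the smallest (or near-smallest) values making this work, so that the bound $\beta_2'\le180$ is as advertised. In practice I would fix the prime tuple and the value $101$, loop over candidate residues $r_j$ and candidate base-classes modulo $11184810$, and verify the finitely many modular identities by direct computation — so the ``proof'' reduces, as in Theorem~\ref{thm:smallrepdigitbase}, to exhibiting the correct tuple $\{(r_j,m_j,p_j)\}$ and invoking \eqref{eq:rieselcongruence}. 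The only genuinely mathematical inputs are the covering-system property of $\mathcal{C}$ and the order computations $\ord_{p_j}(180)\mid240$, both of which are finite checks.
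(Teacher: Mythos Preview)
Your proposal is correct and follows essentially the same approach as the paper: the paper's proof exhibits the explicit tuple $\{(r_j,m_j,p_j)\}=\{(1,2,3),(2,3,7),(0,4,5),(2,8,17),(10,12,13),(6,24,241)\}$, splits into the cases $p_j\mid b$ (namely $p_j\in\{3,5\}$) and $p_j\nmid b$, and verifies $101_b^{(t)}\equiv 2^{-r_j}\pmod{p_j}$ exactly as you outline. One small correction: the oddness of $101_b^{(t)}$ here follows from $b\equiv180\equiv0\pmod2$ (so $\sum_{i<t}b^i\equiv1\pmod2$), not from the parity of $t$.
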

\begin{proof}
Let
$$\{(r_j,m_j,p_j):1\leq j\leq 6\}=\{(1,2,3),(2,3,7),
(0,4,5),(2,8,17),(10,12,13),(6,24,241)\}.$$
Notice that $\mathcal{C}=\{r_j\pmod{m_j}:1\leq j\leq 6\}$ is a covering system. We claim that $101_b^{(t)}\equiv2^{-r_j}\pmod{p_j}$ for all $1\leq j\leq6$. Note that the congruence in the claim holds when $j\in\{1,3\}$ since $p_1=3$ and $p_3=5$ divide $b$. For $j\in\{2,4,5,6\}$, the congruence holds since $101(180^{171}-1)/(180-1)\equiv 2^{-r_j}\pmod{p_j}$ and $171^{240}\equiv1\pmod{p_j}$. Therefore, $101_b^{(t)}$ is a Riesel number by \eqref{eq:rieselcongruence}.
\end{proof}

The next theorem provides sufficient conditions on $b$ for which there exist $b$-repunit Sierprinski numbers, and its corollary establishes $\beta_1\leq 147$.

\begin{theorem}\label{thm:sierpinskipowersof2}
Let $\tau$ be an integer such that $2^{2^\tau}-1$ has at least two distinct primitive prime divisors. For each $1\leq j\leq\tau$, let $p_j$ be a primitive prime divisor of $2^{2^j}-1$, and let
$$\ell_j=\begin{cases}
1& \text{if }b\equiv 0\pmod{p_j};\\
p_j& \text{if }b\equiv 1\pmod{p_j};\text{ and}\\
\ord_{p_j}(b)& \text{otherwise}.
\end{cases}$$
Furthermore, let $L=\lcm(2,\ell_1,\ell_2,\ldots,\ell_{\tau})$, and let $q$ be a primitive prime divisor of $2^{2^{\tau}}-1$ with $q\neq p_{\tau}$. For any integer $b>2$ with $b\not\equiv0\pmod{q}$, there exist infinitely many $b$-repunit Sierpi\'nski numbers provided that $b$ satisfies one of the following:
\begin{enumerate}[$(i)$]
\item\label{item:bequiv1} $b\equiv 1\pmod{q}$ and $q$ does not divide $L$;
\item\label{item:bnotequiv1} $b\not\equiv 1\pmod{q}$ and there exists an integer $t$ such that $(b^t-1)/(b-1)\equiv -1\pmod{q}$ and $t\equiv 1\pmod{L}$.
\end{enumerate}
\end{theorem}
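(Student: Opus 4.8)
The plan is to mimic the covering-system construction used throughout Section~\ref{sec:results}, but using the specific ``dyadic'' covering system $\mathcal{C}=\{2^{j-1}-1\pmod{2^j}:1\le j\le\tau\}\cup\{2^\tau-1\pmod{2^\tau}\}$ (or an equivalent collection whose moduli are $2,4,8,\ldots,2^\tau$ with one modulus $2^\tau$ appearing twice). The point of this covering system is that the primitive prime divisors $p_j$ of $2^{2^j}-1$ are pairwise distinct by Bang's theorem (Theorem~\ref{thm:bang}), and the one repeated modulus $2^\tau$ is what forces the hypothesis that $2^{2^\tau}-1$ has \emph{two} distinct primitive prime divisors $p_\tau$ and $q$. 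For each congruence $n\equiv r_j\pmod{2^j}$ we will demand $k\equiv-2^{-r_j}\pmod{p_j}$, and for the extra congruence with modulus $2^\tau$ we demand $k\equiv-2^{-r}\pmod q$ for the appropriate residue $r$; by \eqref{eq:sierpinskicongruence} any sufficiently large odd $k$ satisfying all these congruences (which form a consistent system by CRT since the $p_j,q$ are distinct and we can throw in $k\equiv1\pmod2$) is a Sierpi\'nski number.

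So the real content is to show that a $b$-repunit $1_b^{(t)}=(b^t-1)/(b-1)$ can be made to satisfy \emph{all} of these congruences simultaneously for a suitable $t$. First I would handle the primes $p_j$. If $p_j\mid b$ then $b^\ell\equiv0$ and one checks the repunit's residue directly; if $p_j\nmid b$, then choosing $t\equiv1\pmod{\ell_j}$ (where $\ell_j=\ord_{p_j}(b)$, or $p_j$ in the $b\equiv1$ case to handle the degenerate order-$1$ situation) makes $(b^t-1)/(b-1)\equiv1\pmod{p_j}$ up to the precise residue forced by the construction — here the definitions of $\ell_j$ and $L=\lcm(2,\ell_1,\ldots,\ell_\tau)$ in the statement are exactly rigged so that $t\equiv1\pmod L$ simultaneously pins down the repunit modulo every $p_j$. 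The only freedom we have left in the covering system is which residue $r_j$ to attach to each modulus, and one can always choose $r_j$ so that the required congruence $1_b^{(t)}\equiv-2^{-r_j}\pmod{p_j}$ is met, because as $r_j$ ranges over residues mod $2^j$ the value $-2^{-r_j}$ ranges over a coset of $\langle2\rangle$ in $(\Z/p_j)^\times$ and $p_j\mid2^{2^j}-1$ guarantees this coset contains the value $1_b^{(t)}$ takes — actually more carefully, we just need $1_b^{(t)}$ to lie in the multiplicative group generated by $2$ modulo $p_j$, which is automatic since $1_b^{(t)}\equiv1$ there.

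The delicate prime is $q$, the second primitive prime divisor of $2^{2^\tau}-1$, and this is where the dichotomy (\ref{item:bequiv1})/(\ref{item:bnotequiv1}) enters. We have already spent our congruence $t\equiv1\pmod L$ to control all the $p_j$, and the modulus $2^\tau$ is already used (with prime $p_\tau$), so the extra copy of $2^\tau$ must be paired with $q$ and forces $1_b^{(t)}\equiv-2^{-r}\pmod q$ for some residue $r$; by adjusting $r$ this is the same as requiring $1_b^{(t)}\equiv-1\pmod q$ after absorbing the power of $2$ (note $2$ has order $2^\tau$ mod $q$, so $-1\equiv2^{2^{\tau-1}}$ is itself a power of $2$, and in fact $-2^{-r}$ hits every element of $(\Z/q)^\times$ divisible appropriately — the cleanest formulation is that we need $1_b^{(t)}\equiv-1\pmod q$). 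In case (\ref{item:bnotequiv1}) this is literally hypothesis: we are handed a $t$ with $(b^t-1)/(b-1)\equiv-1\pmod q$ and $t\equiv1\pmod L$, and we just use that $t$ (and then note infinitely many such $t$ work by periodicity, also tucking in $t$ odd). In case (\ref{item:bequiv1}), $b\equiv1\pmod q$ gives $(b^t-1)/(b-1)\equiv t\pmod q$, so we need $t\equiv-1\pmod q$; this is compatible with $t\equiv1\pmod L$ (by CRT) precisely because we assumed $q\nmid L$, and then again infinitely many odd such $t$ exist. The main obstacle — and the part that needs the most care in the writeup — is bookkeeping the residues $r_j$ and the exact ``$-2^{-r_j}$ versus $1$'' matching: one must verify that for the dyadic covering system there genuinely is a choice of residues $r_1,\ldots,r_\tau$ together with the repeated one for $q$ such that every target congruence is simultaneously achievable, i.e. that the covering system can be ``aligned'' with the value $1_b^{(t)}\equiv1$ at each $p_j$ and $\equiv-1$ at $q$. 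I would make this precise by first fixing $t$ (satisfying the two or three congruence conditions above), then reading off $r_j$ from the forced congruence, and finally checking $\{r_j\pmod{2^j}\}$ together with the extra residue still covers $\Z$ — which it does because the covering property of the dyadic system is insensitive to the choice of residues as long as we keep one modulus-$2^\tau$ class to plug the last hole. Once that alignment is in hand, everything else is the standard application of \eqref{eq:sierpinskicongruence} together with ``choose $k$ large so the values exceed all the $p_j$ and $q$.''
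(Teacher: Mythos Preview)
Your overall plan coincides with the paper's, but the crucial ``alignment'' step is not correctly justified. You assert that ``the covering property of the dyadic system is insensitive to the choice of residues as long as we keep one modulus-$2^\tau$ class to plug the last hole.'' This is false: with moduli $2,4$ and residues $0\pmod 2$, $0\pmod 4$, the uncovered set is $\{1,3\}\pmod 4$, two residue classes, not one. In general the family $\{r_j\pmod{2^j}:1\le j\le\tau\}$ leaves a \emph{single} class mod $2^\tau$ uncovered only when each $r_j$ lies outside the classes already covered by the smaller moduli, and you never check this for the residues your argument forces. Since the residue $r$ attached to $q$ is then determined (it must be the unique hole), you also cannot ``adjust $r$'' to reduce the target congruence to $1_b^{(t)}\equiv-1\pmod q$ as you suggest.

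The paper resolves this with a direct computation you gesture at (for $q$) but never carry out for the $p_j$. Because $p_j$ is a \emph{primitive} prime divisor of $2^{2^j}-1$, the order of $2$ modulo $p_j$ is exactly $2^j$, so $2^{2^{j-1}}\equiv-1\pmod{p_j}$. Hence the specific choice $r_j=2^{j-1}$ gives $-2^{-r_j}\equiv1\pmod{p_j}$ on the nose, matching the value $1_b^{(t)}\equiv1\pmod{p_j}$ that $t\equiv1\pmod L$ forces. With these particular residues, $\{2^{j-1}\pmod{2^j}:1\le j\le\tau\}$ genuinely covers everything except $0\pmod{2^\tau}$, and pairing that hole with $q$ demands $1_b^{(t)}\equiv-2^{-0}\equiv-1\pmod q$, which is exactly the condition in $(\ref{item:bequiv1})$ and $(\ref{item:bnotequiv1})$. (Your written covering $\{2^{j-1}-1\pmod{2^j}\}\cup\{2^\tau-1\pmod{2^\tau}\}$ is the shift of this by $-1$; it would instead require $1_b^{(t)}\equiv2\pmod{p_j}$ and $1_b^{(t)}\equiv-2\pmod q$, which does not match the hypotheses of the theorem.) Once you replace the residue-insensitivity claim with this explicit choice $r_j=2^{j-1}$, your argument becomes the paper's.
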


\begin{proof}
Notice that $\{2^{j-1}\pmod{2^j}:1\leq j\leq \tau\}\cup\{0\pmod{2^{\tau}}\}$ is a covering system. Let $t$ be an integer such that $t\equiv1\pmod{L}$. We claim that $1_b^{(t)}\equiv-2^{-2^{j-1}}\pmod{p_j}$ for all $1\leq j\leq\tau$, which is equivalent to $1_b^{(t)}\equiv1\pmod{p_j}$ since $2^{2^{j-1}}\equiv-1\pmod{p_j}$. Note that the congruence in the claim holds trivially if $b\equiv 0\pmod{p_j}$. If $b\equiv 1\pmod{p_j}$, then $1_b^{(t)}=\sum_{i=0}^{t-1}b^i\equiv t\equiv1\pmod{L}$, implying that $1_b^{(t)}\equiv1\pmod{p_j}$ since $\ell_j=p_j$. Otherwise, $b^t\equiv b\pmod{p_j}$ since $\ell_j=\ord_{p_j}(b)$, which again implies that $1_b^{(t)}\equiv1\pmod{p_j}$.

In view of \eqref{eq:sierpinskicongruence}, for $1_b^{(t)}$ to be a Sierpi\'nski number, it remains to ensure that $1_b^{(t)}\equiv-2^{-0}\equiv-1\pmod{q}$. If $b\equiv1\pmod{q}$, then since $q$ does not divide $L$, we may impose an additional restriction that $t\equiv-1\pmod{q}$. Hence, $1_b^{(t)}=\sum_{i=0}^{t-1}b^i\equiv t\equiv-1\pmod{q}$. If $b\not\equiv1\pmod{q}$, then condition~$(\ref{item:bnotequiv1})$ yields our desired congruence.
\end{proof}


\begin{corollary}\label{cor:sierpinskismallrepunitbase}
Let $b>2$ be such that $b\equiv\mathfrak{b}\pmod{641}$ for some $\mathfrak{b}\in\{1,147,265,378\}$ and $b\not\equiv1\pmod{5}$. Then there are infinitely many $b$-repunit Sierpi\'nski numbers.
\end{corollary}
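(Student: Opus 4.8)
The plan is to derive the corollary from Theorem~\ref{thm:sierpinskipowersof2} by taking $\tau=6$. The key input is the factorisation $2^{2^j}-1=\prod_{i=0}^{j-1}(2^{2^i}+1)=\prod_{i=0}^{j-1}F_i$ with $F_i=2^{2^i}+1$, together with the standard fact that a prime $p$ divides $F_{j-1}$ exactly when $\ord_p(2)=2^j$; hence the primitive prime divisors of $2^{2^j}-1$ are precisely the prime factors of $F_{j-1}$. Since $F_0=3$, $F_1=5$, $F_2=17$, $F_3=257$, $F_4=65537$ are prime and $F_5=2^{32}+1=641\cdot 6700417$, the number $2^{2^6}-1$ has the two distinct primitive prime divisors $641$ and $6700417$, so Theorem~\ref{thm:sierpinskipowersof2} applies with $\tau=6$. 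I would take $p_j$ to be a prime factor of $F_{j-1}$ for $1\le j\le6$, so $p_1,\dots,p_6=3,5,17,257,65537,6700417$, and $q=641$, a primitive prime divisor of $2^{2^6}-1$ with $q\ne p_6$.

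Next I would control $L=\lcm(2,\ell_1,\dots,\ell_6)$. Each $\ell_j$ equals $1$ or $p_j$ or divides $p_j-1$, and here $p_1-1,\dots,p_6-1=2,4,16,256,65536,6700416$, where $6700416=2^7\cdot3\cdot17449$. As $641$ is prime, equals no $p_j$, and divides none of these, we get $q=641\nmid L$ for every admissible $b$; this already settles the case $\mathfrak b=1$ via condition~$(\ref{item:bequiv1})$. For the other residues I would further use $b\not\equiv1\pmod5$, which forces $\ell_2\ne5$; since $5$ equals no $p_j$ with $j\ne2$ and divides none of the $p_j-1$, this gives $5\nmid L$. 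Writing $d=\ord_{641}(b)$, a divisor of $640=2^7\cdot5$, it follows that $\gcd(L,d)=2^{\min(v_2(L),\,v_2(d))}$, a power of $2$ bounded by $2^{v_2(d)}$; moreover this bound is attained for suitable $b$ (take the free residue $b\bmod65537$ to be a primitive root, so $v_2(\ell_5)=16\ge v_2(d)$).

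For $\mathfrak b\in\{147,265,378\}$ I would verify condition~$(\ref{item:bnotequiv1})$. Since $b\not\equiv1\pmod{641}$, I must produce $t$ with $t\equiv1\pmod L$ and $(b^t-1)/(b-1)\equiv-1\pmod{641}$; because $b\not\equiv1\pmod{641}$ and $\mathfrak b\ne2$, this is equivalent to $\mathfrak b^{\,t}\equiv2-\mathfrak b\pmod{641}$, a congruence depending only on $t\bmod d$. The substantive step is a finite computation: for each of the three $\mathfrak b$ one checks that $2-\mathfrak b$ lies in $\langle\mathfrak b\rangle$ modulo $641$ with a discrete logarithm $s_0$ (so $\mathfrak b^{s_0}\equiv2-\mathfrak b$) satisfying $s_0\equiv1\pmod{2^{v_2(d)}}$. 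Given this, $s_0\equiv1\pmod{\gcd(L,d)}$, so the Chinese remainder theorem yields $t\equiv1\pmod L$ with $t\equiv s_0\pmod d$, meeting both requirements; Theorem~\ref{thm:sierpinskipowersof2} then gives infinitely many $b$-repunit Sierpi\'nski numbers.

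I expect the main obstacle to be exactly that finite verification — checking both membership $2-\mathfrak b\in\langle\mathfrak b\rangle$ mod $641$ and the $2$-adic condition $s_0\equiv1\pmod{2^{v_2(\ord_{641}(\mathfrak b))}}$; it is this condition that singles out which residues $\mathfrak b$ are admissible, and when $\mathfrak b$ is a primitive root mod $641$ it reads $s_0\equiv1\pmod{128}$. The only conceptual care needed elsewhere is noticing that $\gcd(L,d)$ can be forced up to $2^{v_2(d)}$, so that this $2$-adic condition on $s_0$ is genuinely what must hold uniformly over all admissible bases $b$.
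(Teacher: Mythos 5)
Your proposal follows essentially the same route as the paper's proof: apply Theorem~\ref{thm:sierpinskipowersof2} with $\tau=6$, the Fermat-prime divisors $3,5,17,257,65537,6700417$ and $q=641$, use $b\not\equiv1\pmod{5}$ to force $\gcd(L,640)$ to be a power of $2$ dividing $2^7$, and reduce condition~(ii) to finding $t$ with $\mathfrak{b}^{\,t}\equiv 2-\mathfrak{b}\pmod{641}$ and $t\equiv1\pmod{2^7}$. The one step you defer---the finite discrete-log verification---is exactly what the paper supplies, exhibiting $t\equiv385,513,257\pmod{640}$ for $\mathfrak{b}=147,265,378$ respectively, each indeed congruent to $1$ modulo $128$.
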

\begin{proof}
Consider the covering system $\{2^{j-1}\pmod{2^j}:1\leq j\leq 6\}\cup\{0\pmod{2^6}\}$. Let $p_1=3$, $p_2=5$, $p_3=17$, $p_4=257$, $p_5=65537$, $p_6=6700417$, and $q=641$. Further let $L_0=\lcm(2,p_1,p_3,p_4,p_5,p_6,p_1-1,p_2-1,p_3-1,p_4-1,p_5-1,p_6-1)$, which is a multiple of $L$ since $b\not\equiv1\pmod{p_2}$ and $\ord_{p_j}(b)$ divides $p_j-1$ for all $1\leq j\leq6$. Note that $q$ does not divide $L$ since $q$ does not divide $L_0$. Therefore, if $b\equiv1\pmod{q}$, then part~$(\ref{item:bequiv1})$ of Theorem~\ref{thm:sierpinskipowersof2} is satisfied.

Now suppose that $b\not\equiv 1\pmod{q}$. Then $\mathfrak{b}\in\{147,265,278\}$ and $\ord_q(b)=640$. Let
$$t\equiv \begin{cases}
385\pmod{640}& \text{if } \mathfrak{b}=147;\\
513\pmod{640}& \text{if } \mathfrak{b}=265;\\
257\pmod{640}& \text{if } \mathfrak{b}=378\\\end{cases}$$
so that $(b^t-1)/(b-1)\equiv -1\pmod{q}$. Note that $\gcd(640,L_0)=2^7$ and $t\equiv 1\pmod {2^7}$. Hence, we may apply the Chinese remainder theorem to ensure that $t\equiv 1\pmod{L}$. Therefore, part~$(\ref{item:bnotequiv1})$ of Theorem~\ref{thm:sierpinskipowersof2} is satisfied.
\end{proof}

The next theorem is a Riesel analog to Theorem~\ref{thm:sierpinskipowersof2}. Its corollary establishes $\beta_1'\leq 16518444216571$.

\begin{theorem}\label{thm:Rieselpowersof2}
Let $\tau$ be an integer such that $2^{2^\tau}-1$ has at least two distinct primitive prime divisors. Let $p_j$ be a primitive prime divisor of $2^{2^j}-1$ for each $1\leq j\leq\tau$, $q$ be a primitive prime divisor of $2^{2^{\tau}}-1$ with $q\neq p_{\tau}$, and $P=p_1p_2\dotsb p_\tau$. For any integer $b>2$ with $b\equiv1\pmod{P}$, there exist infinitely many $b$-repunit Riesel numbers provided that $b$ satisfies one of the following:
\begin{enumerate}[$(i)$]
\item\label{item:rieselbequiv0} $b\equiv0\pmod{q}$;
\item\label{item:rieselbequiv1} $b\equiv1\pmod{q}$;
\item\label{item:rieselbnotequiv1} $b\not\equiv0\pmod{q}$, $b\not\equiv1\pmod{q}$, and $\gcd(P,\ord_q(b))=1$.
\end{enumerate}
\end{theorem}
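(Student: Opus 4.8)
The plan is to run the proof of Theorem~\ref{thm:sierpinskipowersof2} with the Sierpi\'nski implication~\eqref{eq:sierpinskicongruence} replaced by the Riesel implication~\eqref{eq:rieselcongruence}. I would start from the covering system $\{2^{j-1}\pmod{2^j}:1\le j\le\tau\}\cup\{0\pmod{2^\tau}\}$ and note that, because $p_j$ is a primitive prime divisor of $2^{2^j}-1=(2^{2^{j-1}}-1)(2^{2^{j-1}}+1)$, we have $2^{2^{j-1}}\equiv-1\pmod{p_j}$, so $2^{-2^{j-1}}\equiv-1\pmod{p_j}$; and for the congruence $0\pmod{2^\tau}$ the relevant residue is $2^{-0}\equiv1\pmod q$. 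Thus, to make $1_b^{(t)}$ a Riesel number via~\eqref{eq:rieselcongruence}, it suffices to choose $t$ so that $1_b^{(t)}\equiv-1\pmod{p_j}$ for every $j$ and $1_b^{(t)}\equiv1\pmod q$, together with $1_b^{(t)}$ odd and $1_b^{(t)}$ larger than each of $p_1,\dots,p_\tau,q$.

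Since $b\equiv1\pmod P$, for any $t$ we have $1_b^{(t)}=\sum_{i=0}^{t-1}b^i\equiv t\pmod{p_j}$ for all $j$, so the condition $1_b^{(t)}\equiv-1\pmod{p_j}$ for every $j$ becomes simply $t\equiv-1\pmod P$. For the $q$-condition I would split into the three cases of the hypothesis: in case~(\ref{item:rieselbequiv0}), $b\equiv0\pmod q$ forces $1_b^{(t)}\equiv1\pmod q$ automatically; in case~(\ref{item:rieselbequiv1}), $b\equiv1\pmod q$ gives $1_b^{(t)}\equiv t\pmod q$, so I additionally require $t\equiv1\pmod q$; and in case~(\ref{item:rieselbnotequiv1}), $1_b^{(t)}\equiv1\pmod q$ is equivalent to $b^t\equiv b\pmod q$, i.e.\ $t\equiv1\pmod{\ord_q(b)}$, which I impose.

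It then remains to check that these congruences on $t$, together with $t\equiv1\pmod2$ (needed only when $b$ is odd, to make $1_b^{(t)}$ odd), admit a common solution. The point is that $q$ is coprime to $P$: each $p_j$ with $j<\tau$ divides the proper divisor $2^{2^j}-1$ of $2^{2^\tau}-1$, while $q$ is a primitive prime divisor of $2^{2^\tau}-1$, and $q\neq p_\tau$ by hypothesis, so $q\notin\{p_1,\dots,p_\tau\}$. Hence $t\equiv-1\pmod P$ and $t\equiv1\pmod q$ are compatible (cases~(\ref{item:rieselbequiv0}) and~(\ref{item:rieselbequiv1})), and in case~(\ref{item:rieselbnotequiv1}) the hypothesis $\gcd(P,\ord_q(b))=1$ is exactly what makes $t\equiv-1\pmod P$ and $t\equiv1\pmod{\ord_q(b)}$ compatible; the extra condition $t\equiv1\pmod2$ costs nothing since $P$ and $q$ are odd. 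By the Chinese remainder theorem the admissible $t$ form a nonempty arithmetic progression, and for all but finitely many of them $1_b^{(t)}$ exceeds $\max\{p_1,\dots,p_\tau,q\}$, so every $1_b^{(t)}\cdot 2^n-1$ is a proper multiple of one of these primes and hence composite; since $t\mapsto 1_b^{(t)}$ is strictly increasing, this produces infinitely many distinct $b$-repunit Riesel numbers. I expect the only delicate step to be this CRT bookkeeping---in particular verifying $q\nmid P$ and recognizing that condition~(\ref{item:rieselbnotequiv1}) is precisely the compatibility requirement---but it runs exactly parallel to the proof of Theorem~\ref{thm:sierpinskipowersof2}.
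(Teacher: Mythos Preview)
Your proof is correct and follows essentially the same route as the paper's: the same covering system $\{2^{j-1}\pmod{2^j}\}\cup\{0\pmod{2^\tau}\}$, the same reduction via $b\equiv1\pmod P$ to $t\equiv-1\pmod P$, and the same three-case analysis for the $q$-congruence. You are in fact somewhat more thorough than the paper, explicitly verifying $q\nmid P$, handling the parity of $1_b^{(t)}$, and noting the size bound needed to ensure proper (not just trivial) divisibility---details the paper leaves implicit in this proof.
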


\begin{proof}
Notice that $\{2^{j-1}\pmod{2^j}:1\leq j\leq \tau\}\cup\{0\pmod{2^{\tau}}\}$ is a covering system. Let $t$ be an integer such that $t\equiv-1\pmod{P}$. Since $b\equiv1\pmod{P}$, we have $1_b^{(t)}=\sum_{i=0}^{t-1}b^i\equiv t\equiv-1\equiv2^{-2^{j-1}}\pmod{p_j}$ for all $1\leq j\leq\tau$.

In view of \eqref{eq:rieselcongruence}, for $1_b^{(t)}$ to be a Riesel number, it remains to ensure that $1_b^{(t)}\equiv2^{-0}\equiv1\pmod{q}$. Note that this congruence trivially holds if $b\equiv0\pmod{q}$. If $b\equiv1\pmod{q}$, then since $q$ does not divide $P$, we may impose an additional restriction that $t\equiv1\pmod{q}$, which implies that $1_b^{(t)}=\sum_{i=0}^{t-1}b^i\equiv t\equiv1\pmod{q}$. If we are under condition~$(\ref{item:rieselbnotequiv1})$, then we may impose an additional restriction that $t\equiv1\pmod{\ord_q(b)}$. In this case, $1_b^{(t)}=(b^t-1)/(b-1)\equiv(b-1)/(b-1)\equiv1\pmod{p_j}$.
\end{proof}

\begin{corollary}\label{cor:rieselsmallrepunitbase}
Let $b\equiv16518444216571\pmod{18446744073709551615}$. Then there are infinitely many $b$-repunit Riesel numbers.
\end{corollary}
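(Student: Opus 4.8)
The plan is to instantiate Theorem~\ref{thm:Rieselpowersof2} with $\tau=6$ and the classical Fermat-number factorization, exactly in parallel to the proof of Corollary~\ref{cor:sierpinskismallrepunitbase}. First I would take the covering system $\{2^{j-1}\pmod{2^j}:1\leq j\leq 6\}\cup\{0\pmod{2^6}\}$ and set $p_1=3$, $p_2=5$, $p_3=17$, $p_4=257$, $p_5=65537$, $p_6=6700417$ (a primitive prime divisor of $2^{2^6}-1=2^{64}-1$), and $q=641$, the other primitive prime divisor of $2^{64}-1$. Then $P=p_1p_2p_3p_4p_5p_6=3\cdot5\cdot17\cdot257\cdot65537\cdot6700417$, and one checks that $18446744073709551615=2^{64}-1=P\cdot q$, which explains the modulus in the statement. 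The reduction $16518444216571\pmod{P}$ must equal $1$; this is a finite check that puts $b$ into case where $b\equiv1\pmod P$, as required by the hypothesis of Theorem~\ref{thm:Rieselpowersof2}.

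Next I would determine which of the three alternatives $(\ref{item:rieselbequiv0})$--$(\ref{item:rieselbnotequiv1})$ applies. Reducing $16518444216571$ modulo $q=641$ decides this: if the residue is $0$ we are in case~$(\ref{item:rieselbequiv0})$, if it is $1$ we are in case~$(\ref{item:rieselbequiv1})$, and otherwise we must verify $\gcd(P,\ord_q(b))=1$ to land in case~$(\ref{item:rieselbnotequiv1})$. Since $q-1=640=2^7\cdot5$, the order $\ord_q(b)$ divides $640$, so its only possible common factor with $P=3\cdot5\cdot17\cdot257\cdot65537\cdot6700417$ is $5$; hence the gcd condition reduces to checking that $\ord_q(b)$ is not divisible by $5$, i.e.\ that $b^{128}\equiv1\pmod{641}$. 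This is a single modular exponentiation. Once the appropriate case of Theorem~\ref{thm:Rieselpowersof2} is confirmed to hold, that theorem immediately yields infinitely many $b$-repunit Riesel numbers, completing the proof.

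The only real work is the arithmetic bookkeeping: verifying $2^{64}-1 = P\cdot 641$ with $P = p_1\cdots p_6$, confirming $16518444216571\equiv1\pmod{P}$, and computing $16518444216571\bmod 641$ together with the order of that residue (equivalently, the one exponentiation $b^{128}\bmod 641$). I expect the main obstacle to be purely computational---there is no conceptual difficulty, only the need to carry out these congruence checks correctly (most conveniently via the Chinese remainder theorem applied to the prime-power factors of $P$, since each $p_i$ is prime). Everything else is a direct appeal to Theorem~\ref{thm:Rieselpowersof2}, whose hypotheses---$2^{2^6}-1$ having the two distinct primitive prime divisors $6700417$ and $641$, and $b>2$---are evidently satisfied.
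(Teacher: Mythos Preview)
Your overall plan---instantiate Theorem~\ref{thm:Rieselpowersof2} with $\tau=6$ and the Fermat--prime data---is exactly right, but you have the roles of the two primitive prime divisors of $2^{64}-1$ reversed. You copied the assignment $p_6=6700417$, $q=641$ from Corollary~\ref{cor:sierpinskismallrepunitbase}, whereas the paper takes $p_6=641$ and $q=6700417$ here. This swap is not cosmetic: with your $P=3\cdot5\cdot17\cdot257\cdot65537\cdot6700417$, the hypothesis $b\equiv1\pmod{P}$ of Theorem~\ref{thm:Rieselpowersof2} fails. Indeed $16518444216571=6\cdot641\cdot(2^{32}-1)+1$, and since $6700417$ is prime and divides none of $6$, $641$, or $2^{32}-1$ (the last because $\ord_{6700417}(2)=64$), we get $16518444216571\not\equiv1\pmod{6700417}$. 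So your ``finite check'' would come back negative.

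With the paper's assignment, $P=3\cdot5\cdot17\cdot257\cdot65537\cdot641=2753074036095$ and $16518444216571=6P+1$, so $b\equiv1\pmod{P}$ holds on the nose. One then lands in case~$(\ref{item:rieselbnotequiv1})$: $b\not\equiv0,1\pmod{6700417}$, and since $6700417-1=2^{5}\cdot3\cdot 69796$\ldots\ actually $6700416=2^7\cdot3\cdot17449$, the relevant point is simply that $\gcd(P,\ord_{6700417}(b))=1$, which the paper records as a direct computation. Once you make this swap, the rest of your outline goes through unchanged.
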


\begin{proof}
Consider the covering system $\{2^{j-1}\pmod{2^j}:1\leq j\leq 6\}\cup\{0\pmod{2^6}\}$. Let $p_1=3$, $p_2=5$, $p_3=17$, $p_4=257$, $p_5=65537$, $p_6=641$, and $q=6700417$. Then $P=p_1p_2\dotsb p_6=2753074036095$, $Pq=18446744073709551615$, and $b\equiv 1\pmod{P}$. Furthermore, $b\not\equiv0\pmod{q}$, $b\not\equiv1\pmod{q}$, and $\gcd(P,\ord_q(b))=\gcd(P,\ord_q(16518444216571))=1$.
\end{proof}

\section{Concluding Remarks}\label{sec:conclusion}

Besides improving the bounds to the answers to Questions~\ref{question:smallrepunitbase} through \ref{questions:smallrepintegerbase2riesel}, there are a few other interesting directions for investigation. For instance, for positive integers $b\geq2$, $k$, and $t$, we may define a \emph{$b$-repstring} as $k_b^{(z;t)}=k(b^{(z+\ell) t}-1)/(b^{z+\ell}-1)$ for any nonnegative integer $z$, where $\ell=\lfloor\log_b(k)\rfloor+1$. Repstrings are a generalization of repintegers since $k_b^{(0;t)}=k_b^{(t)}$, while in general, repstrings allow us to insert $z$ zeroes between repeated occurances of $k$. For example, $1001001$ is a repstring with $b=10$, $k=1$, $t=3$, $z=2$, and $\ell=1$. We may ask the following questions on $b$-repstrings.

\begin{question}\label{questions:smallrepstringbase2}
What is the smallest positive integer $\widetilde{\kappa}$ for which there exists a nonnegative integer $z$ and a positive integer $t$ such that $\widetilde{\kappa}_2^{(z;t)}$ is a $2$-repstring Sierpi\'nski number?
\end{question}

\begin{question}\label{questions:smallrepstringbase2riesel}
What is the smallest positive integer $\widetilde{\kappa}'$ for which there exists a nonnegative integer $z$ and a positive integer $t$ such that $\widetilde{\kappa}'^{(z;t)}_2$ is a $2$-repstring Riesel number?
\end{question}

The following two theorems establish that $\widetilde{\kappa}\leq659$ and $\widetilde{\kappa}'\leq659$.

\begin{theorem}\label{thm:smallrepstringbase2sierpinski}
Let $t\equiv\mathfrak{t}\pmod{2730}$ for some $\mathfrak{t}\in\{131,1361\}$. Then $659_2^{(2;t)}$ is a $2$-repstring Sierpi\'nski number.
\end{theorem}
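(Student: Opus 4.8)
The plan is to mimic exactly the structure of the proof of Theorem~\ref{thm:smallrepintegerbase2sierpinski}, but now working with the repstring $659_2^{(2;t)}$, i.e.\ the number $659\cdot(2^{(2+\ell)t}-1)/(2^{2+\ell}-1)$ where $\ell=\lfloor\log_2(659)\rfloor+1=10$, so the relevant exponent block size is $z+\ell=12$. First I would exhibit an explicit covering system $\mathcal{C}=\{r_j\pmod{m_j}:1\le j\le\tau\}$ together with primes $p_j$ (each $p_j$ a divisor of $2^{m_j}-1$, pairwise distinct, guaranteed possible by Theorem~\ref{thm:bang}) so that the congruences \eqref{eq:sierpinskicongruence} apply; this pins down a residue $K$ and a modulus $M=2\prod_j p_j$ with the property that any positive integer $k\equiv K\pmod M$ (and large enough) is a Sierpi\'nski number. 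The number of moduli and their sizes will have $\lcm=2730=2\cdot3\cdot5\cdot7\cdot13$, which is why $t$ is taken modulo $2730$; I expect the covering to use moduli whose primitive prime divisors lie in a small set such as $\{3,5,7,13,\dots\}$ chosen so that $\prod_j p_j$ divides a convenient number.

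Next I would verify the key congruence $659_2^{(2;t)}\equiv K\pmod{p_j}$ for every $j$ and for every $t$ in the stated residue class mod $2730$. The mechanism is the same as before: for each prime $p_j$ that does not divide $2$, the order of $2^{12}$ modulo $p_j$ divides $2730/\gcd(\cdot)$ — more precisely one checks $2^{12\cdot 2730}\equiv1\pmod{p_j}$ for all relevant $p_j$, so that $2^{12t}$ depends only on $t\bmod 2730$, and hence $659_2^{(2;t)}=659\cdot(2^{12t}-1)/(2^{12}-1)$ is a well-defined residue modulo $p_j$ that is constant on each residue class of $t$ mod $2730$. One then checks computationally that both admissible classes $\mathfrak t\in\{131,1361\}$ yield $659_2^{(2;t)}\equiv K\pmod{p_j}$ for all $j$; the two classes presumably arise because $659_2^{(2;t)}$ hits the target residue for two distinct values of $t\bmod 2730$ (and the remaining $t$ in those classes are odd, ensuring $659_2^{(2;t)}$ is odd). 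Finally, since $659_2^{(2;t)}\ge 659>p_j$ for all $j$, the divisibility forces compositeness for every $n$, completing the argument.

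The main obstacle is purely in finding the right covering system and the accompanying data: one needs a covering whose moduli are all divisors of $2730$ (or at least have $\lcm$ equal to $2730$), whose prime divisors $p_j$ are distinct, and for which the single number $659$ simultaneously satisfies $659_2^{(2;t)}\equiv-2^{-r_j}\pmod{p_j}$ for a common value of $t$. Because the block size here is $12$ rather than $15$ (as in Theorem~\ref{thm:smallrepintegerbase2sierpinski}) and because we are inserting two zeros, the arithmetic is slightly different, and there is no guarantee a priori that $659$ works with the obvious covering; most of the work is a search, and once the data are in hand the verification is a finite computation. I would therefore present the proof by simply listing the tuples $(r_j,m_j,p_j)$, asserting that the resulting $\mathcal{C}$ is a covering system, recording $M$ and $K$, noting $2^{12\cdot2730}\equiv1\pmod{p_j}$ for each $p_j\nmid 2$, and stating that a direct check confirms $659_2^{(2;t)}\equiv K\pmod{M}$ for $t\equiv131$ or $1361\pmod{2730}$ — exactly parallel to the earlier proof.
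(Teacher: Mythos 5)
Your overall strategy --- exhibit covering data $(r_j,m_j,p_j)$ with moduli of $\lcm$ equal to $24$ and primes from $\{3,7,5,17,13,241\}$, observe that $659_2^{(2;t)}=659(2^{12t}-1)/(2^{12}-1)$ is determined modulo each $p_j$ by $t\bmod 2730$, and finish with a finite computation --- is exactly the paper's, which presents this result as a sketch in the same style as Theorem~\ref{thm:smallrepintegerbase2sierpinski}. However, one step of your plan would fail as written: you assume a \emph{single} covering system and a \emph{single} target residue $K\pmod{M}$ serve both classes $\mathfrak{t}\in\{131,1361\}$. They cannot. For instance, $2^{12}\equiv1\pmod 7$ and $659\equiv1\pmod 7$, so $659_2^{(2;t)}\equiv t\pmod 7$; since $131\equiv5\pmod 7$ while $1361\equiv3\pmod 7$, the two classes land on different residues modulo $7$ and therefore require different residues $r_j$ in \eqref{eq:sierpinskicongruence}. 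The paper accordingly uses $(2,3,7)$ for $\mathfrak{t}=131$ but $(1,3,7)$ for $\mathfrak{t}=1361$ (and likewise adjusts the residues for $3$, $13$, and $241$), arriving at the two distinct Sierpi\'nski residues $2131099$ and $1639459$ modulo $11184810$. So the search you defer must be carried out separately for each class, yielding two covering systems with the same moduli but different residues.

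A second, smaller point: knowing $2^{12\cdot2730}\equiv1\pmod{p_j}$, hence that $2^{12t}$ is determined by $t\bmod 2730$, does not by itself pin down $(2^{12t}-1)/(2^{12}-1)$ modulo $p_j$, because $2^{12}-1=4095=3^2\cdot5\cdot7\cdot13$ is divisible by four of the six primes. For those primes the quotient must be read as the geometric sum $\sum_{i=0}^{t-1}2^{12i}\equiv t\pmod{p_j}$, which is periodic in $t$ with period $p_j$; combined with the period $2$ coming from $2^{12}\equiv-1$ modulo $17$ and $241$, this is exactly why the modulus on $t$ is $2730=2\cdot3\cdot5\cdot7\cdot13$. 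With these two repairs your outline matches the paper's argument.
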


The proof of Theorem~\ref{thm:smallrepstringbase2sierpinski} resembles that of Theorem~\ref{thm:smallrepintegerbase2sierpinski}. In particular, when $t\equiv131\pmod{2730}$, we can use
$$\{(r_j,m_j,p_j):1\leq j\leq6\}=\{(1,2,3),(2,3,7),(0,4,5),(6,8,17),(10,12,13),(18,24,241)\}$$
to show that $659_2^{(2;t)}=659(2^{12\cdot t}-1)(2^{12}-1)\equiv2131099\pmod{11184810}$ is a Sierpi\'nski number. Note here that $\ell=\lfloor\log_2659\rfloor+1=10$. Similarly, when $t\equiv1361\pmod{2730}$, we can use
$$\{(r_j,m_j,p_j):1\leq j\leq6\}=\{(1,2,3),(1,3,7),(0,4,5),(6,8,17),(2,12,13),(18,24,241)\}$$
to show that $659_2^{(2;t)}\equiv1639459\pmod{11184810}$ is a Sierpi\'nski number.

The next theorem will follow as a corollary to Theorems~\ref{thm:smallrepstringbase2sierpinski} and \ref{thm:additiveinverse}.

\begin{theorem}\label{thm:smallrepstringbase2riesel}
Let $t\equiv\mathfrak{t}\pmod{2730}$ for some $\mathfrak{t}\in\{1369,2599\}$. Then $659_2^{(2;t)}$ is a $2$-repstring Riesel number.
\end{theorem}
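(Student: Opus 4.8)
The plan is to deduce Theorem~\ref{thm:smallrepstringbase2riesel} from Theorem~\ref{thm:smallrepstringbase2sierpinski} via the ``additive inverse'' principle of Theorem~\ref{thm:additiveinverse}, exactly as Theorem~\ref{thm:smallrepintegerbase2riesel} is obtained from Theorem~\ref{thm:smallrepintegerbase2sierpinski}. The key observation is that the covering-system construction for Sierpi\'nski numbers in \eqref{eq:sierpinskicongruence} and for Riesel numbers in \eqref{eq:rieselcongruence} differ only in the sign of the residue of $k$ modulo each $p_j$: a single covering system $\mathcal{C}=\{r_j\pmod{m_j}\}$ with primes $p_j\mid 2^{m_j}-1$ forces $k\cdot2^n+1\equiv0\pmod{p_j}$ when $k\equiv-2^{-r_j}$, and forces $k\cdot2^n-1\equiv0\pmod{p_j}$ when $k\equiv2^{-r_j}$. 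Since these two residue conditions are negatives of one another modulo each odd prime $p_j$, and hence modulo $\prod_j p_j$, Theorem~\ref{thm:additiveinverse} presumably states that $k$ is a Sierpi\'nski number (by this method) if and only if its ``additive inverse'' modulo $2\prod_j p_j$ — that is, the unique $k'$ with $k'\equiv-k\pmod{\prod_j p_j}$ and $k'$ odd — is a Riesel number (by this method).

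First I would record from the proof of Theorem~\ref{thm:smallrepstringbase2sierpinski} that, with $\ell=\lfloor\log_2 659\rfloor+1=10$ and $z=2$ so that $z+\ell=12$, the quantity $659_2^{(2;t)}=659(2^{12t}-1)/(2^{12}-1)$ satisfies
$$659_2^{(2;t)}\equiv 2131099\pmod{11184810}\quad\text{when }t\equiv131\pmod{2730},$$
and $659_2^{(2;t)}\equiv1639459\pmod{11184810}$ when $t\equiv1361\pmod{2730}$, where $11184810=2\prod_{j=1}^6 p_j$. Next I would compute the additive inverses modulo $\prod_{j=1}^6 p_j=5592405$ while preserving oddness modulo $2$: one checks that $2131099$ corresponds to the Riesel residue $2131099'$, and similarly $1639459$ corresponds to a Riesel residue; these should turn out to be precisely the residues $\pmod{11184810}$ that Theorem~\ref{thm:additiveinverse} certifies as Riesel numbers. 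The final step is to match the congruence classes of $t$: since $659_2^{(2;t)}$ depends on $t$ only through $2^{12t}\pmod{p_j}$, and $\ord_{p_j}(2)\mid 2730$ (indeed $\lcm$ of the relevant orders divides $2730$ since $2^{12\cdot2730}\equiv1\pmod{p_j}$ for each $p_j\in\{3,7,5,17,13,241\}$), replacing $t\equiv131$ by $t\equiv131+c\pmod{2730}$ shifts the residue predictably; I would verify that the shifts $t\equiv1369\pmod{2730}$ and $t\equiv2599\pmod{2730}$ are exactly the ones making $659_2^{(2;t)}$ land in the Riesel residue classes, and that the resulting $t$ is still forced into the classes modulo $2$ and modulo $\ord_{p_j}(2)$ needed to apply \eqref{eq:rieselcongruence}.

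The main obstacle I anticipate is not conceptual but bookkeeping: making sure the correspondence $t\mapsto$ (Sierpi\'nski residue) and $t\mapsto$ (Riesel residue) are genuinely negatives after accounting for the $z=2$ shift, the exponent multiplier $z+\ell=12$, and the constraint that $k'$ must remain odd (so the inverse is taken modulo $\prod p_j$, not modulo $2\prod p_j$, and then lifted). Concretely, one must check $1369\equiv -131 + (\text{something})\pmod{2730}$ is consistent with $2^{12\cdot1369}\equiv 2^{-12\cdot131}\pmod{p_j}$ for all six primes — equivalently that $12(1369+131)=12\cdot1500=18000\equiv0\pmod{\ord_{p_j}(2)}$ for each $p_j$, and likewise $12(2599+1361)=12\cdot3960\equiv0$. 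Once these order computations are confirmed (they reduce to checking divisibility by $\lcm$ of a handful of small orders), Theorem~\ref{thm:smallrepstringbase2riesel} follows immediately by invoking Theorem~\ref{thm:additiveinverse} together with Theorem~\ref{thm:smallrepstringbase2sierpinski}.
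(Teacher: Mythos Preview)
Your high-level plan is exactly the paper's: Theorem~\ref{thm:smallrepstringbase2riesel} is stated to follow as a corollary of Theorems~\ref{thm:smallrepstringbase2sierpinski} and~\ref{thm:additiveinverse}, with no further argument. However, your guess at the content of Theorem~\ref{thm:additiveinverse} is off, and this leads you into unnecessary bookkeeping. The theorem does \emph{not} say that the residue class of $k$ modulo $\prod p_j$ gets negated; rather, it says directly that if a fixed covering system produces $k_2^{(z;t)}$ as a Sierpi\'nski number for all $t\equiv\mathfrak{t}\pmod{w}$, then a (shifted) covering system produces $k_2^{(z;t')}$ as a Riesel number for all $t'\equiv -\mathfrak{t}\pmod{w}$. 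The additive inverse is taken on the parameter $\mathfrak{t}$, not on the residue of the repstring.

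With the correct statement in hand, the entire proof collapses to two arithmetic checks: $-131\equiv 2599\pmod{2730}$ and $-1361\equiv 1369\pmod{2730}$. No computation of Riesel residue classes, no order-divisibility checks, and no lifting modulo $2$ is required. Note also that your tentative pairing $(131\leftrightarrow 1369,\;1361\leftrightarrow 2599)$ is swapped relative to what Theorem~\ref{thm:additiveinverse} actually gives; the natural pairing is $(131\leftrightarrow 2599,\;1361\leftrightarrow 1369)$, since $131+2599=1361+1369=2730$.
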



We end this article with an interesting observation. From Theorems~\ref{thm:smallrepintegerbase2sierpinski} and \ref{thm:smallrepintegerbase2riesel}, when the repeating integer is $18107$ in base-$2$ representation, the number of repetitions to get a Sierpi\'nski number is $25$ modulo $56$, while the number of repetitions to get a Riesel number is $31$ modulo $56$. Notice that $25$ and $31$ are additive inverses modulo $56$. Similarly, from Theorems~\ref{thm:smallrepstringbase2sierpinski} and \ref{thm:smallrepstringbase2riesel}, when the repeating integer is $659$ in base-$2$ representation, the number of repetitions to get a Sierpi\'nski number and the number of repetitions to get a Riesel number form additive inverses of each other, as $131\equiv-2599\pmod{2730}$ and $1361\equiv-1369\pmod{2730}$. Table~\ref{table:rsinverse}, obtained computationally, establishes this observation in several other cases, and Theorem~\ref{thm:additiveinverse} confirms this observation.

\begin{table}[H]
\begin{tabular}{c|c|c|c}
$k$& $z$& Sierpi\'nski condition& Riesel condition\\
\hline\hline
\multirow{4}{*}{$659$}& \multirow{4}{*}{$2$}& $t\equiv131\pmod{2730}$& $t'\equiv2599\pmod{2730}$\\
& & $k_2^{(z;t)}\equiv2131099\pmod{11184810}$& $k_2^{(z;t')}\equiv762701\pmod{11184810}$\\
\cline{3-4}
& & $t\equiv1361\pmod{2730}$& $t'\equiv1369\pmod{2730}$\\
& & $k_2^{(z;t)}\equiv1639459\pmod{11184810}$& $k_2^{(z;t')}\equiv1254341\pmod{11184810}$\\
\hline
\multirow{4}{*}{$727$}& \multirow{4}{*}{$2$}& $t\equiv127\pmod{2730}$& $t'\equiv2603\pmod{2730}$\\
& & $k_2^{(z;t)}\equiv3098059\pmod{11184810}$& $k_2^{(z;t')}\equiv10702091\pmod{11184810}$\\
\cline{3-4}
& & $t\equiv1507\pmod{2730}$& $t'\equiv1223\pmod{2730}$\\
& & $k_2^{(z;t)}\equiv271129\pmod{11184810}$& $k_2^{(z;t')}\equiv2344211\pmod{11184810}$\\
\hline
\multirow{2}{*}{$1177$}& \multirow{2}{*}{$4$}& $t\equiv19\pmod{84}$& $t'\equiv65\pmod{84}$\\
& & $k_2^{(z;t)}\equiv84319681\pmod{140100870}$& $k_2^{(z;t')}\equiv95997337\pmod{140100870}$\\
\hline
\multirow{4}{*}{$1189$}& \multirow{4}{*}{$1$}& $t\equiv1159\pmod{2730}$& $t'\equiv1571\pmod{2730}$\\
& & $k_2^{(z;t)}\equiv7523281\pmod{11184810}$& $k_2^{(z;t')}\equiv4384979\pmod{11184810}$\\
\cline{3-4}
& & $t\equiv2059\pmod{2730}$& $t'\equiv671\pmod{2730}$\\
& & $k_2^{(z;t)}\equiv7400371\pmod{11184810}$& $k_2^{(z;t')}\equiv4507889\pmod{11184810}$\\
\hline
\multirow{4}{*}{$1549$}& \multirow{4}{*}{$1$}& $t\equiv38\pmod{1365}$& $t'\equiv1327\pmod{1365}$\\
& & $k_2^{(z;t)}\equiv32552687\pmod{209191710}$& $k_2^{(z;t')}\equiv23909173\pmod{209191710}$\\
\cline{3-4}
& & $t\equiv1343\pmod{1365}$& $t'\equiv22\pmod{1365}$\\
& & $k_2^{(z;t)}\equiv198067007\pmod{209191710}$& $k_2^{(z;t')}\equiv67586563\pmod{209191710}$\\
\hline
\multirow{4}{*}{$1747$}& \multirow{4}{*}{$1$}& $t\equiv307\pmod{2730}$& $t'\equiv2423\pmod{2730}$\\
& & $k_2^{(z;t)}\equiv4573999\pmod{11184810}$& $k_2^{(z;t')}\equiv5049251\pmod{11184810}$\\
\cline{3-4}
& & $t\equiv397\pmod{2730}$& $t'\equiv2333\pmod{2730}$\\
& & $k_2^{(z;t)}\equiv7892569\pmod{11184810}$& $k_2^{(z;t')}\equiv1730681\pmod{11184810}$\\
\hline
\multirow{2}{*}{$18107$}& \multirow{2}{*}{$0$}& $t\equiv25\pmod{56}$& $t'\equiv31\pmod{56}$\\
& & $k_2^{(z;t)}\equiv8007257\pmod{11184810}$& $k_2^{(z;t')}\equiv10702091\pmod{11184810}$\\
\hline
\multirow{2}{*}{$26267$}& \multirow{2}{*}{$0$}& $t\equiv9\pmod{56}$& $t'\equiv47\pmod{56}$\\
& & $k_2^{(z;t)}\equiv1624097\pmod{11184810}$& $k_2^{(z;t')}\equiv1730681\pmod{11184810}$\\
\hline
\multirow{2}{*}{$32681$}& \multirow{2}{*}{$0$}& $t\equiv51\pmod{56}$& $t'\equiv5\pmod{56}$\\
& & $k_2^{(z;t)}\equiv4067003\pmod{11184810}$& $k_2^{(z;t')}\equiv6610811\pmod{11184810}$\\
\hline
\end{tabular}
\caption{Sierpi\'nski $2$-repstrings and Riesel $2$-repstrings}\label{table:rsinverse}
\end{table}

Before presenting the theorem, we provide a useful lemma that follows from the work of Filaseta and Harvey \cite{fh}.

\begin{lemma}\label{lem:fh}
Let $a$ be an integer. If $\mathcal{C}=\{r_j\pmod{m_j}\}$ is a covering system, then $\mathcal{C}_a=\{r_j+a\pmod{m_j}\}$ is a covering system.
\end{lemma}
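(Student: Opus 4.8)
The plan is to unwind the definition of a covering system directly; no machinery beyond a change of variables is needed. Recall that $\mathcal{C}=\{r_j\pmod{m_j}\}$ being a covering system means precisely that for every integer $n$ there is an index $j$ with $n\equiv r_j\pmod{m_j}$, and that $\mathcal{C}_a$ is obtained from $\mathcal{C}$ by replacing each residue $r_j$ with $r_j+a$ while leaving every modulus $m_j$ untouched; in particular $\mathcal{C}_a$ is still a finite collection of congruences.

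First I would fix an arbitrary integer $n$ and apply the covering hypothesis for $\mathcal{C}$ not to $n$ itself but to the shifted integer $n-a$: this produces an index $j$ with $n-a\equiv r_j\pmod{m_j}$. Adding $a$ to both sides yields $n\equiv r_j+a\pmod{m_j}$, which is exactly the $j$-th congruence of $\mathcal{C}_a$, so $n$ is covered by $\mathcal{C}_a$. Since $n$ was arbitrary, $\mathcal{C}_a$ is a covering system. There is no genuine obstacle here: the only point worth noting is that $n\mapsto n-a$ is a bijection of $\Z$, so quantifying over all $n$ is the same as quantifying over all $n-a$; the citation to Filaseta and Harvey serves only to credit the first appearance of this observation rather than to supply any nontrivial input.
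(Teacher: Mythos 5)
Your proof is correct and complete: applying the covering hypothesis to the shifted integer $n-a$ and translating back is exactly the right (and essentially only) argument. The paper itself gives no proof of this lemma, merely citing Filaseta and Harvey for the observation, so your write-up supplies the elementary justification the authors left implicit; nothing is missing.
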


\begin{theorem}\label{thm:additiveinverse}
Let $k$, $\mathfrak{t}$, and $w$ be positive integers, and let $z$ be a nonnegative integer. Then there exists a fixed covering system $\mathcal{C}$ that produces $k_2^{(z;t)}$ as a Sierpi\'{n}ski number for all $t\equiv\mathfrak{t}\pmod{w}$ using the covering system method in Section~$\ref{sec:coveringsystems}$ if and only if there exists a fixed covering system $\mathcal{C}'$ that produces $k_2^{(z;t')}$ as a Riesel number for all $t'\equiv-\mathfrak{t}\pmod{w}$ using the covering system method in Section~$\ref{sec:coveringsystems}$.
\end{theorem}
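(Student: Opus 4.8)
The plan is to exploit the near-symmetry between the Sierpi\'nski congruence \eqref{eq:sierpinskicongruence} and the Riesel congruence \eqref{eq:rieselcongruence}: the former requires $k\equiv-2^{-r_j}\pmod{p_j}$ while the latter requires $k\equiv 2^{-r_j}\pmod{p_j}$, so the two differ only by a sign on the right-hand side. The key observation is that replacing the residue $r_j$ by $r_j+L/2$, where $L=\lcm(m_j)$ (after first arranging, via Lemma~\ref{lem:fh} if necessary, that every $m_j$ is even — or more carefully, handling the odd moduli separately), has the effect of multiplying $2^{-r_j}$ by $2^{-L/2}$. One then needs to understand what $2^{L/2}$ is modulo each $p_j$. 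Since $p_j\mid 2^{m_j}-1$ and $m_j\mid L$, we have $\ord_{p_j}(2)\mid L$; the relevant question is whether $2^{L/2}\equiv -1\pmod{p_j}$. This is exactly the situation in which shifting the covering residues turns a Sierpi\'nski-type obstruction into a Riesel-type one. So the skeleton is: given $\mathcal C$ producing $k_2^{(z;t)}$ as Sierpi\'nski for $t\equiv\mathfrak t\pmod w$, produce $\mathcal C'$ by an explicit shift of residues, verify $\mathcal C'$ is still a covering system (immediate from Lemma~\ref{lem:fh}), and check that the Riesel congruences for $\mathcal C'$ are satisfied by $k_2^{(z;t')}$ precisely when $t'\equiv-\mathfrak t\pmod w$. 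By symmetry of the construction the converse direction is identical with the roles of $\mathcal C$ and $\mathcal C'$ swapped.

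First I would unwind what it means for $\mathcal C=\{r_j\pmod{m_j}\}$ to produce $k_2^{(z;t)}$ as a Sierpi\'nski number: for each $j$ there is a prime $p_j$ (distinct across $j$, obtained via Bang's theorem, Theorem~\ref{thm:bang}) with $k_2^{(z;t)}\equiv-2^{-r_j}\pmod{p_j}$ for all $t\equiv\mathfrak t\pmod w$. Writing $k_2^{(z;t)}=k(2^{(z+\ell)t}-1)/(2^{z+\ell}-1)$ with $\ell=\lfloor\log_2 k\rfloor+1$, this congruence depends on $t$ only through $2^{(z+\ell)t}\bmod p_j$, i.e. through $t\bmod \ord_{p_j}(2)/\gcd(z+\ell,\cdot)$; the hypothesis that it holds for the whole residue class $t\equiv\mathfrak t\pmod w$ means the dependence is "frozen" at $t=\mathfrak t$ as far as each $p_j$ is concerned. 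Then I would replace each $r_j$ by $r_j'\equiv -r_j\pmod{m_j}$ — this is legitimate: shifting $t\mapsto -t$ has the effect of sending $2^{(z+\ell)t}\mapsto 2^{-(z+\ell)t}$, i.e. inverting, and combined with flipping $r_j$ to $-r_j$ one checks $k_2^{(z;-t)}\equiv \pm 2^{-(-r_j)}\cdot(\text{correction})\pmod{p_j}$. The point is to show the correction factor is exactly the sign $-1$, converting $-2^{-r_j}$ into $2^{-(-r_j)}$, which is the Riesel condition with residues $-r_j$.

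More precisely, the step I'd carry out carefully is the identity $k_2^{(z;-t)}\equiv -k_2^{(z;t)}\cdot 2^{-2(z+\ell)t}\pmod{p_j}$ or some such relation, derived from $(2^{-N}-1) = -2^{-N}(2^N-1)$. Combining this with $k_2^{(z;t)}\equiv -2^{-r_j}\pmod{p_j}$ and using that $2^{(z+\ell)\mathfrak t}$ has a fixed value mod $p_j$ dictated by the covering, one reads off that $k_2^{(z;t')}$ for $t'\equiv-\mathfrak t$ satisfies $k_2^{(z;t')}\equiv 2^{-r_j'}\pmod{p_j}$ where $r_j'\equiv -r_j\pmod{m_j}$ — exactly \eqref{eq:rieselcongruence} for the covering $\mathcal C'=\{-r_j\pmod{m_j}\}$, which by Lemma~\ref{lem:fh} (with $a=-2r_j$ applied congruence-by-congruence, or more cleanly by noting $\{-r_j\pmod{m_j}\}$ covers $\mathbb Z$ iff $\{r_j\pmod{m_j}\}$ does, since $n$ is covered by the former iff $-n$ is covered by the latter) is again a covering system. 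The primes $p_j$ remain distinct since they are unchanged. The size condition $k_2^{(z;t')}\cdot 2^n-1>p_j$ holds for $t'$ large, which is automatic within an arithmetic progression. Running the argument in reverse (starting from the Riesel covering $\mathcal C'$) gives the converse, so the biconditional follows.

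The main obstacle I anticipate is bookkeeping the interaction between the shift $t\mapsto -t$ and the shift $r_j\mapsto -r_j$ so that the two sources of inversion combine to produce exactly one factor of $-1$ and nothing else — in particular making sure the exponent $(z+\ell)$ and the "frozen" value $2^{(z+\ell)\mathfrak t}\bmod p_j$ are handled consistently, and that $w$ can be taken to be (a multiple of) $\lcm_j \ord_{p_j}(2)/\gcd(\cdot)$ without loss so that "for all $t\equiv\mathfrak t\pmod w$" transfers cleanly to "for all $t'\equiv-\mathfrak t\pmod w$." A secondary subtlety is the degenerate case $p_j\mid 2^{z+\ell}-1$ (so that the denominator of $k_2^{(z;t)}$ vanishes mod $p_j$); here $k_2^{(z;t)}\equiv \ell_j' k\pmod{p_j}$ for some multiplicity and the $t$-dependence disappears entirely, which should be checked to be compatible with both the Sierpi\'nski and Riesel sides — but in that case $r_j$ can be taken to be $0$ and $-r_j=0$ too, so no real shift is needed. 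I expect these to be routine once set up, with no deep difficulty, since the symmetry $+1\leftrightarrow-1$ is structurally built into \eqref{eq:sierpinskicongruence}–\eqref{eq:rieselcongruence}.
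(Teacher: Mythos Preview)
Your overall strategy matches the paper's: exploit the identity $(2^{-N}-1)=-2^{-N}(2^N-1)$ to relate $k_2^{(z;\mathfrak t)}$ to $k_2^{(z;-\mathfrak t)}$ modulo each $p_j$, thereby converting the Sierpi\'nski congruences into Riesel congruences for a shifted covering. However, the specific shift you propose, $r_j'=-r_j$, is wrong. Carrying out the computation you outline (and using $2^{(z+\ell)w}\equiv1\pmod{p_j}$, which the paper first deduces from the hypothesis), one finds
\[
k_2^{(z;w-\mathfrak t)}\equiv -2^{-(z+\ell)\mathfrak t}\cdot k_2^{(z;\mathfrak t)}\equiv -2^{-(z+\ell)\mathfrak t}\cdot(-2^{-r_j})\equiv 2^{-(r_j+(z+\ell)\mathfrak t)}\pmod{p_j},
\]
so the Riesel condition \eqref{eq:rieselcongruence} is met with $r_j'=r_j+(z+\ell)\mathfrak t$, a \emph{uniform} translate of the covering --- and this is exactly where Lemma~\ref{lem:fh} enters. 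The ``correction factor'' you hoped would be just $-1$ is in fact $-2^{-(z+\ell)\mathfrak t}$; the extra power of $2$ cannot be discarded and must be absorbed into the new residues. Your choice $r_j'=-r_j$ would require $k_2^{(z;t')}\equiv 2^{r_j}\pmod{p_j}$, which holds only when $2r_j+(z+\ell)\mathfrak t\equiv0\pmod{m_j}$ --- not true in general.

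Your treatment of the degenerate case $p_j\mid 2^{z+\ell}-1$ also needs repair: $r_j$ is fixed by the given covering and cannot simply ``be taken to be $0$.'' The paper instead notes that in this case $k_2^{(z;t)}\equiv kt\pmod{p_j}$, so the hypothesis $k(qw+\mathfrak t)\equiv-2^{-r_j}$ for all integers $q$ immediately yields $k(-qw-\mathfrak t)\equiv 2^{-r_j}\equiv2^{-(r_j+(z+\ell)\mathfrak t)}$, again matching the shift $r_j+(z+\ell)\mathfrak t$. Once the shift is corrected, the rest of your plan (covering via Lemma~\ref{lem:fh}, the converse by symmetry, the size checks) goes through exactly as in the paper.
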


\begin{proof}
Let $\mathcal{C}=\{r_j\pmod{m_j}\}$ be the covering system that produces $k_2^{(z;t)}$ as a Sierpi\'nski number for all $t\equiv\mathfrak{t}\pmod{w}$. Further let $p_j$ be a primitive prime divisor of $2^{m_j}-1$ such that $k_2^{(z;t)}\equiv-2^{-r_j}\pmod{p_j}$ for each $j$. Note that $\mathcal{C}'=\{r_j+(z+\ell)\mathfrak{t}\pmod{m_j}\}$ is a covering system by Lemma~\ref{lem:fh}, and we claim that $\mathcal{C}'$ produces $k_2^{(z;t')}$ as a Riesel number for all $t'\equiv-\mathfrak{t}\pmod{w}$.

If $2^{z+\ell}\equiv1\pmod{p_j}$ for some $j$, then $kt\equiv k_2^{(z;t)}\equiv-2^{-r_j}\pmod{p_j}$ for all $t\equiv\mathfrak{t}\pmod{w}$. In other words, $k(qw+\mathfrak{t})\equiv-2^{-r_j}\pmod{p_j}$ for all integers $q$. Hence, $k(-qw-\mathfrak{t})\equiv2^{-r_j}\pmod{p_j}$ for all integers $q$, thus $k_2^{(z;t')}\equiv kt'\equiv2^{-r_j}\equiv2^{-(r_j+(z+\ell)\mathfrak{t})}\pmod{p_j}$ for all $t'\equiv-\mathfrak{t}\pmod{w}$.

It remains to show that $k_2^{(z;t')}\equiv2^{-(r_j+(z+\ell)\mathfrak{t})}\pmod{p_j}$ when $2^{z+\ell}\not\equiv1\pmod{p_j}$. Note that $k\not\equiv0\pmod{p_j}$ since $k_2^{(z;t)}$ is a multiple of $k$ and $-2^{-r_j}\not\equiv0\pmod{p_j}$. Further note that $2^{(z+\ell)w}\equiv1\pmod{p_j}$, which can be deduced from the congruence $k_2^{(z;w+\mathfrak{t})}\equiv-2^{-r_j}\equiv k_2^{(z;\mathfrak{t})}\pmod{p_j}$. Hence, $k_2^{(z;t')}\equiv k_2^{(z;w-\mathfrak{t})}\pmod{p_j}$. The proof of the claim is completed as follows:
\begin{align*}
k_2^{(z;\mathfrak{t})}=k\frac{2^{(z+\ell)\mathfrak{t}}-1}{2^{z+\ell}-1}&\equiv-2^{-r_j}\pmod{p_j}\\
k\frac{2^{(z+\ell)\mathfrak{t}}-1}{2^{z+\ell}-1}\cdot\frac{2^{(z+\ell)(w-\mathfrak{t})}-1}{2^{(z+\ell)\mathfrak{t}}-1}&\equiv-2^{-r_j}\cdot\frac{2^{(z+\ell)(w-\mathfrak{t})}-1}{2^{(z+\ell)\mathfrak{t}}-1}\pmod{p_j}\\
k\frac{2^{(z+\ell)(w-\mathfrak{t})}-1}{2^{z+\ell}-1}&\equiv-2^{-r_j}\cdot\frac{2^{(z+\ell)(w-\mathfrak{t})}-1}{2^{(z+\ell)\mathfrak{t}}-1}\cdot\frac{2^{(z+\ell)\mathfrak{t}}}{2^{(z+\ell)\mathfrak{t}}}\pmod{p_j}\\
k_2^{(z;w-\mathfrak{t})}&\equiv-2^{-r_j}\cdot\frac{2^{(z+\ell)w}-2^{(z+\ell)\mathfrak{t}}}{(2^{(z+\ell)\mathfrak{t}}-1)2^{(z+\ell)\mathfrak{t}}}\pmod{p_j}\\
k_2^{(z;w-\mathfrak{t})}&\equiv-2^{-r_j}\cdot\frac{1-2^{(z+\ell)\mathfrak{t}}}{(2^{(z+\ell)\mathfrak{t}}-1)2^{(z+\ell)\mathfrak{t}}}\pmod{p_j}\\
k_2^{(z;w-\mathfrak{t})}&\equiv-2^{-r_j}\cdot(-2^{-(z+\ell)\mathfrak{t}})\pmod{p_j}\\
k_2^{(z;w-\mathfrak{t})}&\equiv2^{-(r_j+(z+\ell)\mathfrak{t})}\pmod{p_j}.
\end{align*}

The proof of the converse follows in a similar fashion.
\end{proof}

\begin{corollary}
The answers $\kappa$ and $\kappa'$ to Questions~$\ref{questions:smallrepintegerbase2}$ and $\ref{questions:smallrepintegerbase2riesel}$, respectively, are equal to each other. Similarly, the answers $\widetilde{\kappa}$ and $\widetilde{\kappa}'$ to Questions~$\ref{questions:smallrepstringbase2}$ and $\ref{questions:smallrepstringbase2riesel}$, respectively, are also equal to each other.
\end{corollary}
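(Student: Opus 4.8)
The plan is to derive the corollary directly from Theorem~\ref{thm:additiveinverse} by a symmetric minimization argument. First I would unwind the definitions: $\kappa$ is the least positive integer $k$ for which there exist a positive integer $t$ and a covering-system construction (as in Section~\ref{sec:coveringsystems}) making $k_2^{(t)}=k_2^{(0;t)}$ a Sierpi\'nski number, and $\kappa'$ is the analogous least integer on the Riesel side. The key observation is that any such construction for a single value of $t$ automatically works for all $t'\equiv t\pmod{w}$, where $w$ is chosen as in the proof of Theorem~\ref{thm:smallrepintegerbase2sierpinski}: taking $w$ to be $\ell$ times the least common multiple of the orders $\ord_{p_j}(2)$ over the primes $p_j$ in the covering construction (with $\ell=\lfloor\log_2 k\rfloor+1$) guarantees that $2^{\ell t}$, and hence $k_2^{(t)}\bmod p_j$, depends only on $t\bmod w$. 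So without loss of generality the Sierpi\'nski construction producing $\kappa_2^{(t)}$ is valid for all $t\equiv\mathfrak{t}\pmod w$ for suitable $\mathfrak{t},w$.

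Next I would apply Theorem~\ref{thm:additiveinverse} with $z=0$ and $k=\kappa$: the existence of a fixed covering system producing $\kappa_2^{(0;t)}$ as a Sierpi\'nski number for all $t\equiv\mathfrak t\pmod w$ is equivalent to the existence of a fixed covering system producing $\kappa_2^{(0;t')}$ as a Riesel number for all $t'\equiv-\mathfrak t\pmod w$. In particular there is \emph{some} positive integer $t'$ (any representative of $-\mathfrak t$ modulo $w$ that is large enough to make $\kappa_2^{(t')}$ exceed all the primes $p_j$, which exists since the residue class is infinite) with $\kappa_2^{(t')}$ a Riesel number. Hence $\kappa'\le\kappa$. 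Running the same argument starting from a minimal Riesel construction for $\kappa'$ and applying the converse direction of Theorem~\ref{thm:additiveinverse} gives $\kappa\le\kappa'$, so $\kappa=\kappa'$. The repstring statement $\widetilde\kappa=\widetilde\kappa'$ is identical except that we now allow an arbitrary nonnegative $z$ and invoke Theorem~\ref{thm:additiveinverse} with that $z$; since the theorem's biconditional holds for every fixed $z$, a minimal witness $(\widetilde\kappa,z,t)$ on the Sierpi\'nski side yields a witness $(\widetilde\kappa,z,t')$ on the Riesel side and vice versa, giving the two inequalities and thus equality.

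The one point requiring a little care — and the main (mild) obstacle — is the passage from "there exists a single $t$" to "there is a fixed covering system valid for a whole residue class mod $w$," which is what Theorem~\ref{thm:additiveinverse} is phrased in terms of. I would address this by noting that the covering-system method of Section~\ref{sec:coveringsystems} only constrains $k_2^{(t)}$ through its residues modulo the finitely many primes $p_1,\dots,p_\tau$, and each such residue is a function of $t$ modulo $w:=(z+\ell)\cdot\lcm_j\ord_{p_j}(2)$; therefore the very same covering system and the same congruences on $k$ certify $k_2^{(z;t)}$ for every $t$ in the class of the original witness modulo $w$. (One should also note that the largeness condition $k_2^{(z;t)}>p_j$ needed for compositeness is automatic for all sufficiently large $t$ in the class, and every infinite residue class contains such $t$.) Once this normalization is in hand, the corollary is an immediate two-line consequence of Theorem~\ref{thm:additiveinverse} applied in both directions, together with the antisymmetry of the relation $\le$ on positive integers.
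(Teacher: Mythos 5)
Your argument is essentially the paper's: the corollary is stated there without proof, as an immediate consequence of Theorem~\ref{thm:additiveinverse} applied in both directions with the appropriate $z$, combined with minimality, exactly as you do. Your normalization step --- upgrading a single witness $t$ to a full residue class modulo $w$ so that Theorem~\ref{thm:additiveinverse} applies --- is a worthwhile detail the paper leaves implicit, but your choice $w=(z+\ell)\cdot\lcm_j\ord_{p_j}(2)$ is not quite sufficient in the degenerate case $2^{z+\ell}\equiv1\pmod{p_j}$: there $k_2^{(z;t)}\equiv kt\pmod{p_j}$, so the residue is periodic in $t$ with period $p_j$ rather than $\ord_{p_j}(2)$, and $w$ must additionally be divisible by each such $p_j$ (this is exactly the case the proof of Theorem~\ref{thm:additiveinverse} treats separately). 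With that correction your proof is complete; note also that, read literally against Questions~\ref{questions:smallrepintegerbase2} and \ref{questions:smallrepintegerbase2riesel}, the corollary tacitly restricts to repintegers produced by the covering system method --- an interpretation you build into your definition of $\kappa$ and which the paper clearly intends.
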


\end{document}